\newtheorem{corollary}{Corollary}[section]
\newtheorem{proposition}{Proposition}[section]
\begin{document}
\begin{titlepage}

\begin{center}
\textbf{\Large{Cross-Training with Imperfect Training Schemes}}
\end{center}
\begin{center}
Burak B\"uke \\
School of Mathematics, The University of Edinburgh,\\ Edinburgh, UK EH9 3JZ\\Phone:+44-131-650-5086\\ Fax:+44-131-650-6553\\ E-mail: B.Buke@ed.ac.uk
\vspace{0.2in}

\"{O}zg\"{u}r  M.\ Araz\\
College of Public Health,\\
Department of Health Promotion, Social and Behavioral Health,\\
University of Nebraska Medical Center,\\
 Omaha, NE 68198, \\
Phone:+1-402-559-5907 \\ Fax:+1-402-559-3773\\ E-mail: 
 ozgur.araz@unmc.edu
 \vspace{0.2in}
 
 John W. Fowler\\
W. P. Carey School of Business,
 Arizona State University,\\
 Tempe, AZ 85287\\
Phone:+1-480-965-3727 \\ Fax:+1-480-965-8629\\ E-mail: 
john.fowler@asu.edu
\end{center}
\end{titlepage}
\newpage

\begin{center}
\textbf{\Large{Cross-Training with Imperfect Training Schemes}}
\end{center}

\begin{abstract}
Cross-training workers is one of the most efficient ways to achieve flexibility in manufacturing and service systems to increase responsiveness to demand variability. However, it is generally the case that cross-trained employees are not as productive as employees who are originally trained on a specific task. Also, the productivity of the cross-trained workers depend on when they are cross-trained. In this work, we consider a two-stage model to analyze the affect of variations in productivity levels of  workers on cross-training policies. Our results indicate that the most important factor determining the problem structure is the consistency in productivity levels of workers trained at different times. As long as cross-training can be done in a consistent manner, the productivity differences between cross-trained workers and workers originally trained on the task plays a minor role. We also analyze the effect of the variabilities in demand and producivity levels. We show that if the productivity levels of workers trained at different times are consistent, the decision maker is inclined to defer the cross-training decisions as the variability of demand or productivity levels increases. However, when the productivities of workers trained at different times differ, the decision maker may prefer to invest more in cross-training earlier as variability increases.
\end{abstract}

{\bf Keywords:} cross-training, flexibility, newsvendor networks, productivity
\vspace{-0.2in}

\section{Introduction}
\vspace{-0.1in}

Designing flexible systems is one of the key strategies to increase responsiveness to variability in the market without sacrificing efficiency of the system. One way to achieve flexibility in a system is to cross-train workers on several processes. Cross-training has been proven to be highly beneficial in many different business environments, including but not limited to the semiconductor and automative industries, call centers and healthcare. For example, in the semiconductor industry, machine operators are often cross-trained to run more than one type of sophisticated equipment and technicians are often cross-trained to maintain more than one type of machine. In addition to increasing efficiency, cross-training can help  keep the budgets low, increase a company's ability to pay more to the employees, to reduce turnover rate, and increase quality due to the workers' ability to react to unexpected changes (see e.g., \cite{lyons92}, \cite{mccune94}, \cite{irakolvan07}). 

Cross-trained workers can be shifted to work on new tasks when needed, which yields a more efficient usage of the resources. However, it is generally the case that cross-trained employees do not perform equally well as employees who are originally trained on a specific task, i.e., the training schemes may be imperfect. Moreover, the productivity levels of the cross-trained workers may depend on when the cross-training is done. In this paper, our main goal is to analyze how these differences in productivity levels affect cross-training decisions. To analyze the effect of imperfect schemes and timing of cross-training decisions, we consider a two-stage model and study the problem in a newsvendor network setting, introduced by~\cite{van98} and~\cite{vanrud02}. Our model is similar to those of~\cite{van98} and~\cite{basranvan09b}. Similar to the prior work, the decision maker decides on the number of employees to cross-train, i.e., the level of flexibility, before realizing the demand for the products. In the prior work, the first stage decisions are structural (design) decisions where the level of flexibility is fixed and does not change in the future. However, after demand reveals the decision maker may see that additional cross-training is beneficial and may wish to increase the level of flexibility. Hence, we extend the two stage model of~\cite{van98} to allow the decision maker to do online cross-training in the second stage. As argued above, the productivities of the employees who are cross-trained before and after the demand is observed may differ. If the demand exceeds the capacity even after the second stage cross-training, the excess demand is lost and an opportunity cost is incurred.

Our main conclusion is that the main factor which determines the structure of the cross-training problem is whether the workers can be cross-trained with the same productivity consistently at different times. As long as the outcomes of the training schemes are consistent, the imperfections play a minor role and the cross-training decisions with imperfect training exhibit similar properties to the case where training schemes are perfect, and the results for perfect training schemes hold with minor modifications. We show that if the online training schemes are as effective as the offline schemes, the cross-training policies are independent of the opportunity cost. Similar to the abovementioned work, we provide newsvendor-type equations to characterize the optimal cross-training levels under different scenarios and use these equations to characterize situations when it is not profitable to invest in cross-training. Then, we investigate how the variability of demand and productivity factors affect cross-training policies. Under some mild conditions, we prove that the decision maker is inclined to decrease first stage cross-training when the variance of random parameters increases and the  first and second stage training schemes are consistent.  

If the first and second stage training schemes are not consistent in productivity, the opportunity cost plays an important role and the structure of the problem changes significantly, i.e., the results for perfect training schemes are no longer applicable. Our findings indicate that this structural difference is mainly due to the loss/gain of ``achievable capacity'' as a consequence of training workers at different times. To investigate this further, we assume that first stage cross-training is more effective on expected and test how the demand variability affects cross-training decisions. We see that for small demand variances, the achievable capacity may be enough to satisfy the demand with high probability even when all the training is done in the second stage. Hence, the opportunity cost plays a minor role and the first-stage cross-training decreases as demand variance increases. However, when demand variance is over a certain threshold, the opportunity cost becomes an important issue and should be avoided by increasing the achievable capacity. Hence, it may be beneficial to increase first-stage cross-training levels as variance increases beyond this threshold. We also study the behavior of cross-training policies as the variances of the first-stage productivity factors change. We observe that there is a critical threshold for the variance such that the first-stage productivity factor is less than the second stage productivity factor with a significant probability. Increasing variance of the first-stage productivity factor below this critical threshold leads to more first-stage cross-training. On the otherhand, increasing the variance above this critical threshold leads to a decrease in the optimal first-stage cross-training. 

The opportunity to benefit from flexibility without too much investment has recently accelerated research in designing efficient flexible systems and we see this work as a part of this research effort. In their seminal paper, ~\cite{jorgra95} show that almost all the benefits of a fully flexible system, where all resources can perform all tasks, can be achieved by using a moderate level of flexibility. Their results demonstrate that using a special flexibility configuration referred to as ``chaining'' and under certain assumptions on demand,  it is possible to obtain 98\% of the throughput of a fully flexible system using resources that can perform only two different tasks.  Using tools from queueing theory, \cite{jorinblu04} observe that cross-training can adversely affect performance if a poor control policy is used and demonstrate that a complete chain is robust with respect to the control policy and parameter uncertainty. In the literature, the ability of companies to achieve flexibility and efficiency while at the same time meeting customer needs is sometimes also refered as production agility~(\cite{gelhopvan07, hoptekvan04, hoppvanoyen04}). \cite{hoppvanoyen04} develop a framework for workforce cross-training, provide a comprehensive review of the recent literature and suggest some future research directions. \cite{hoptekvan04} and \cite{gelhopvan07} analyze flexibility decisions for  manufacturing systems operating under CONWIP or WIP-constrained policies and conclude that a cross-trained worker should perform her original task before helping on other tasks. \cite{pinshu00} perform numerical studies to analyze the trade-off between efficiency and quality due to cross-training. \cite{netdobshu02} show how the cross-training policies are affected by the demand correlation. \cite{DavisKher09}  indicate that under high workload imbalances, an extensive level of cross training is required to significantly improve the overall production performance. In a service environment, \cite{Gnanlet09} use two-stage stochastic programming model to determine optimal resource levels and demonstrate the benefits of cross-training activities in a health care setting. In their recent papers, ~\cite{basranvan09a, basranvan09b} define level-$k$ resources to be the resources that are able to process $k$ different tasks. In~\cite{basranvan09a}, they prove that for symmetric queueing systems one only need to use dedicated resources and level-2 resources. Similar to our analysis, \cite{basranvan09b} use the newsvendor network framework to prove that in the optimal flexibility configurations only two adjacent levels of flexibility are needed.~\cite{cct10} discuss the effect of production efficiency comparing full flexibility with chaining structure. Another paper which is closely related to our work is \cite{chaagni}, where they study the optimum fraction of flexible servers for a two task problem with perfect training schemes. They point to the fact that cross-trained workers may not be as efficient as dedicated workers. However, they do not provide an analysis of the problem.

There have been several attempts to formulate the problem as a mathematical program. \cite{brujoh98} and \cite{campbell99} provide integer programming models for cross-training workers with different capability levels. 
 \cite{wcfw00} develop a two-stage stochastic program for cross-training problem in semiconductor industry. \cite{Tanrisever2012334} propose a multi-stage stochastic integer programming model to design flexible systems in make-to-order environments. In the literature, methods including Markov decision processes, mathematical  programming and  heuristics, are used to schedule cross-trained workers (see e.g., \cite{vaiwin99}, \cite{saykar07} and \cite{Campbell11}). Our primary focus is on aggregate planning of cross-training efforts and we do not address the problem of scheduling the workers.
 \vspace{-0.2in}

\section{A Two-Stage Model for Cross-Training}
\label{Sc:TwoTasks}
\vspace{-0.1in}
In this section, our goal is to provide a detailed analysis of the cross-training problem for two tasks when the cross-training schemes are imperfect. We provide a two-stage model to answer how our cross-training policies are affected by the following factors:
\begin{enumerate}[i.]
\item the imperfectness in training schemes
\item the differences in productivity levels of workers cross-trained at different times
\item the variability in demand and productivity levels.
\end{enumerate} 
We also provide detailed numerical experiments in Section~\ref{Sec:NumRes}. 

We analyze the problem of cross-training workers between two tasks, $\alpha$ and $\gamma$. We assume that the capacity is measured in time units and initially there is $x^0_{\alpha}$ and $x^0_{\gamma}$ units of capacity dedicated to process tasks $\alpha$ and $\gamma$, respectively. The decision maker has to develop an aggregate workforce plan by cross-training some of the available workers before observing the capacity requirements $\tilde{d}_{\alpha}$ and $\tilde{d}_\gamma$. Before actual  demand is realized, it costs $c^1_{\alpha}$ to train one unit of dedicated capacity of $\gamma$ to work on task $\alpha$. The cross-trained workers can still work on their original task $\gamma$ with full capacity. However, they are not as efficient in their new skill $\alpha$, and their capacity needs to be adjusted by a productivity factor $\tilde{\delta}^1_{\alpha}$, where $0<\tilde{\delta}^1_{\alpha}\leq 1$; i.e., if a cross-trained $\gamma$-worker spends one hour working on task $\alpha$, it is equivalant to $\tilde{\delta}^1_{\alpha}$ hours of an original $\alpha$-worker. The productivity factor, $\tilde{\delta}^1_{\alpha}$, can also be perceived as an indicator of the effectiveness of the training program and is assumed to be random. After the capacity requirements for tasks are revealed, we first use the dedicated workers and workers cross-trained in the first stage to satisfy the demand. If the available capacity for task $\alpha$ is not enough to satisfy the requirement and there is excess capacity for task $\gamma$, additional cross-training can be performed online at a unit cost of $c^2_{\alpha}$. The productivity factor for the workers cross-trained in the second stage is $\tilde{\delta}^2_{\alpha}$, where $0<\tilde{\delta}^2_{\alpha}\leq 1$. If the workforce at hand cannot satisfy the capacity requirements even after the second stage cross-training, the demand is lost incurring  a unit opportunity cost of $h_{\alpha}$. A similar mechanism works to satisfy the demand for task $\gamma$ interchanging the subscripts. 

Without loss of generality, we assume that $h_{\alpha}\leq h_{\gamma}$.  To simplify our analysis and notation, we also assume that random variables $\tilde{d}_{\alpha}, \tilde{d}_\gamma, \tilde{\delta}^1=(\tilde{\delta}^1_{\alpha},\tilde{\delta}^1_{\gamma})$ and $\tilde{\delta}^2=(\tilde{\delta}^2_{\alpha},\tilde{\delta}^2_{\gamma})$ are continuous random variables with joint density function $f(\tilde{d}_{\alpha}, \tilde{d}_\gamma, \tilde{\delta}^1, \tilde{\delta}^2)$ and use $\tilde{\xi}=(\tilde{d}_{\alpha}, \tilde{d}_\gamma, \tilde{\delta}^1, \tilde{\delta}^2)$, whenever we do not need to address specific random variables. Throughout this work, a random variable $x$ is denoted $\tilde{x}$, $\mathbb{E}[\tilde{x}]$ denotes the expected value of the random variable. Similarly, we use $\mathbb{E}[\tilde{x};\Omega]=\int_{\Omega}xdP(x)$  to denote the expectation over a scenario region $\Omega$.

Now, we need to write out the objective function explicitly based on the mechanism described above. The decision maker initially decides on $x^1_{\alpha}$ and $x^1_{\gamma}$, which are the amount of workforce cross-trained from dedicated capacity of $\gamma$ and $\alpha$, respectively. We can decompose the cost function $g(x^1_{\alpha},x^1_{\gamma}, \tilde{\xi})$ into the first stage cost which is incurred due to initial cross-training and the second stage cost $v(x^1_{\alpha},x^1_{\gamma}, \tilde{\xi})$ which is revealed after the realization of random parameters. Hence,
\begin{equation}
\min_{x^1_{\alpha}, x^1_{\gamma}}\mathbb{E}[g(x^1_{\alpha},x^1_{\gamma}, \tilde{\xi})]=\min_{x^1_{\alpha}, x^1_{\gamma}}c^1_\alpha x^1_\alpha+c^1_\gamma x^1_\gamma+\mathbb{E}[v(x^1_{\alpha},x^1_{\gamma}, \tilde{\xi})].
\label{Eq:cost_function}
\end{equation}
The second stage cost, $v(x^1_{\alpha},x^1_{\gamma}, \tilde{\xi})$, depends on whether new cross-training is needed after the demand is observed. Hence, $v(x^1_{\alpha},x^1_{\gamma}, \tilde{\xi})$ takes different forms depending on the realization of parameters. To analyze this function further and calculate the expected value for given $x^1_{\alpha}, x^1_{\gamma}$, we first use the tower property
$\mathbb{E}[v(x^1_{\alpha},x^1_{\gamma}, \tilde{\xi})]=\mathbb{E}[\mathbb{E}[v(x^1_{\alpha},x^1_{\gamma},\tilde{\xi})|\tilde{\delta}^2,\tilde{\delta}^1]]$. To calculate the conditional expectation $\mathbb{E}[v(x^1_{\alpha},x^1_{\gamma},\tilde{\xi})|\tilde{\delta}^2, \tilde{\delta}^1]$, we first partition the support of $(\tilde{\delta}^1, \tilde{\delta}^2)$ into  subsets, where in each subset the nature of the second stage decision is different. Then, we partition the support of $\tilde{d}_{\alpha}$ and $\tilde{d}_\gamma$ so that the function $v(x^1_{\alpha},x^1_{\gamma}, \tilde{\xi})$ has a single form within a partition. This partitioning scheme is explained below and the most general graphical representation is given in Figure~\ref{Fg:Partition}.

We start partitioning the support of  productivity factors $(\tilde{\delta}^1, \tilde{\delta}^2)$ by considering whether $h_\alpha\geq \tilde{\delta}^1_\gamma h_{\gamma}$ or $h_\alpha<\tilde{\delta}^1_\gamma h_{\gamma}$. This criterion determines the task to allocate a  worker cross-trained to work on task $\gamma$, when we need the worker for both tasks.   If we allocate the worker for task $\alpha$, our gain will be $h_{\alpha}$. If instead, we allocate the worker to task $\gamma$, our gain will be $\tilde{\delta}^2_\gamma h_{\gamma}$. In the first case, where $h_\alpha\geq \tilde{\delta}^2_\gamma h_{\gamma}$, it is always more profitable to use the fully productive capacity of task $\alpha$ to satisfy $\tilde{d}_\alpha$ when needed. In the second case, where $h_\alpha<\tilde{\delta}^2_\gamma h_{\gamma}$, even if $x_{\alpha}^0$ is enough to satisfy the demand $\tilde{d}_{\alpha}$, it may be more profitable to shift some of this capacity to task $\gamma$ and lose task~$\alpha$ demand.

\begin{figure}
\begin{center}
\includegraphics[scale=0.5]{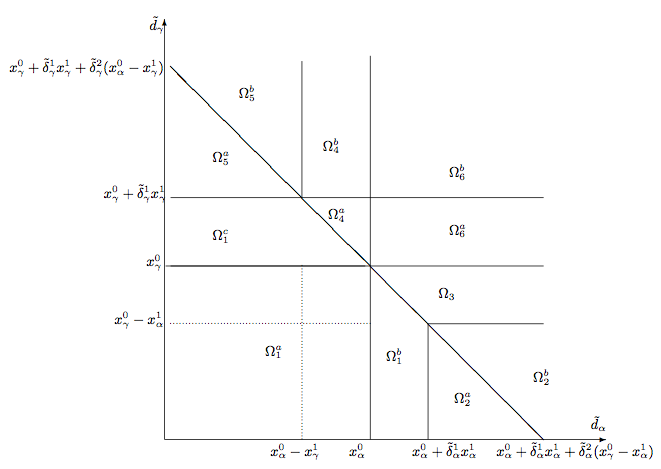}
\vspace{-0.2in}
\caption{Partitioning the support of the demand vector}
\vspace{-0.2in}
\label{Fg:Partition}
\end{center}
\end{figure}

\vspace{-0.2in}
\subsection{Case 1: Workers Used in Their Original Tasks $(h_\alpha\geq \tilde{\delta}^1_\gamma h_{\gamma})$}
\vspace{-0.1in}

Under the condition $h_\alpha\geq \tilde{\delta}^1_\gamma h_{\gamma}$, task $\alpha$ demand will be lost only when the demand $\tilde{d}_{\alpha}$ cannot be supplied by using the initial capacity $x_\alpha^0$ and cross-trained workers who are not allocated to task $\gamma$. Since $h_\alpha\leq h_{\gamma}$, the same claim is always true for task $\gamma$. Now we ask whether we shall resort to second stage cross-training. The answer depends on the value of second stage productivity factors and will be the next criterion for partitioning the support of $\tilde{\delta}^2$.

If we choose to cross-train workers in the second stage, we need to spend $c_{\alpha}^2/\tilde{\delta}_{\alpha}^2$ to satisfy unit demand of task $\alpha$, or we lose the demand and incur a cost of $h_{\alpha}$. Hence, for task $\alpha$, we choose to resort to second stage cross-training first if $c_{\alpha}^2\leq\tilde{\delta}_{\alpha}^2h_{\alpha}$ and we never cross-train in the second stage if $c_{\alpha}^2>\tilde{\delta}_{\alpha}^2h_{\alpha}$. Similar logic applies to task $\gamma$.
\vspace{-0.2in}

\subsubsection{Case 1.a: $c_{\alpha}^2>\tilde{\delta}_{\alpha}^2h_{\alpha}$ and $ c_{\gamma}^2>\tilde{\delta}_{\gamma}^2h_{\gamma}$.}
\vspace{-0.1in}

For this case, losing the demand is preferable over second-stage cross-training for both tasks. Hence, there will be no second stage cross-training, and we can use the following partitioning to explicitly state $v(x^1_{\alpha},x^1_{\gamma},\tilde{\xi})$ for any given $x_\alpha^0, x^0_{\gamma}$ and $\tilde{\xi}$.

\begin{enumerate}
\item$\Omega_1^a=\{(\tilde{d}_{\alpha},\tilde{d}_\gamma): \tilde{d}_{\alpha}\leq x^0_\alpha, \tilde{d}_{\gamma}\leq x^0_\gamma\}$. For the scenarios in $\Omega_1$, the initial workforce is enough to satisfy the capacity requirements. Hence, $v(x^1_{\alpha},x^1_{\gamma}, \tilde{\xi})=0$ on $\Omega_1^a$.

\item$\Omega_1^b=\{(\tilde{d}_{\alpha},\tilde{d}_\gamma): x^0_\alpha<\tilde{d}_{\alpha}\leq \min\{x^0_\alpha+\tilde{\delta}^1_\alpha x^1_{\alpha}, x^0_\alpha+\tilde{\delta}^1_\alpha (x^0_{\gamma}-\tilde{d}_{\gamma})\},\tilde{d}_{\gamma}\leq x^0_\gamma\}$. On $\Omega_1^b$, the initial workforce $x^0_\alpha$ cannot satisfy $\tilde{d}_{\alpha}$. Task $\gamma$ may need to use some workers who are cross-trained to work on $\alpha$, but the remaining cross-trained workforce is enough to satisfy the excess demand for $\alpha$. Hence, again $v(x^1_{\alpha},x^1_{\gamma}, \tilde{\xi})=0$ on $\Omega_1^b$.

\item $\Omega_1^c$ is defined similar to $\Omega_1^b$ with $\alpha$ and $\gamma$ interchanged, and $v(x^1_{\alpha},x^1_{\gamma}, \tilde{\xi})=0$ on $\Omega_1^c$.

We define $\Omega_1=\Omega_1^a\cup\Omega_1^b\cup\Omega_1^c$. When the demand falls in this region, no recourse action is needed in the second stage and hence no cost is incurred.

\item $\Omega_2=\{(\tilde{d}_{\alpha},\tilde{d}_\gamma):x^0_\alpha+\tilde{\delta}^1_\alpha x^1_{\alpha}<\tilde{d}_{\alpha}, 0\leq \tilde{d}_{\gamma}\leq x_{\gamma}^0-x_{\alpha}^1\}(=\Omega_2^a\cup\Omega_2^b$ in Figure~\ref{Fg:Partition}). The demand for $\gamma$ is low so that all the cross-trained workers can be used to work on task $\alpha$. However, even this is not enough to satisfy $\tilde{d}_{\alpha}$ and since second stage cross-training is not profitable, demand is lost. Hence, $v(x^1_{\alpha},x^1_{\gamma}, \tilde{\xi})=h_{\alpha}(\tilde{d}_{\alpha}-x_{\alpha}^0-\tilde{\delta}_\alpha^1 x_{\alpha}^1)$ on $\Omega_2$.

\item $\Omega_3=\{(\tilde{d}_{\alpha},\tilde{d}_\gamma), x^0_\alpha+\tilde{\delta}^1_\alpha (x^0_{\gamma}-\tilde{d}_{\gamma})<\tilde{d}_{\alpha}, x^0_\gamma-x^1_{\alpha}<\tilde{d}_{\gamma}\leq x^0_\gamma\}$. For scenarios in this subset of the support, some but not all of the workers who are cross-trained to work on task $\alpha$ can be shifted to $\alpha$, and this is not enough to satisfy the capacity requirement. The excess demand is lost and hence,  
$v(x^1_{\alpha},x^1_{\gamma}, \tilde{\xi})=h_\alpha(\tilde{d}_{\alpha}-(x^0_\alpha+\tilde{\delta}^1_\alpha (x^0_{\gamma}-\tilde{d}_{\gamma}))) \mbox{ on $\Omega_3$}.$

\item $\Omega_4=\{(\tilde{d}_{\alpha},\tilde{d}_\gamma), x^0_\alpha-x^1_{\gamma}<\tilde{d}_{\alpha}\leq x^0_\alpha, x^0_\gamma+\tilde{\delta}^1_\gamma (x^0_{\alpha}-\tilde{d}_{\alpha})<\tilde{d}_{\gamma}\}(=\Omega_4^a\cup\Omega_4^b$ in Figure~\ref{Fg:Partition}). This region is defined in the same way as $\Omega_3$ where $\alpha$ and $\gamma$ are interchanged. The second stage cost function is given by
$v(x^1_{\alpha},x^1_{\gamma}, \tilde{\xi})=h_\gamma(\tilde{d}_{\gamma}-(x^0_\gamma+\tilde{\delta}^1_\gamma (x^0_{\alpha}-\tilde{d}_{\alpha}))).$

\item $\Omega_5=\{(\tilde{d}_{\alpha},\tilde{d}_\gamma): 0\leq \tilde{d}_{\alpha}\leq x_{\alpha}^0-x_{\gamma}^1, x^0_\gamma+\tilde{\delta}^1_\gamma x^1_{\gamma}\}(=\Omega_5^a\cup\Omega_5^b$ in Figure~\ref{Fg:Partition}). $\Omega_5$ is the analog of $\Omega_2$ with $\alpha$ and $\gamma$ interchanged. Hence, $v(x^1_{\alpha},x^1_{\gamma}, \tilde{\xi})=h_{\gamma}(\tilde{d}_{\gamma}-x_{\gamma}^0-\tilde{\delta}_\gamma^1 x_{\gamma}^1)$ on $\Omega_5$.

\item $\Omega_6=\{(\tilde{d}_{\alpha},\tilde{d}_\gamma): x^0_\alpha<\tilde{d}_{\alpha}, x^0_\gamma<\tilde{d}_{\gamma}\}(=\Omega_6^a\cup\Omega_6^b$ in Figure~\ref{Fg:Partition}). Finally, in this case, the initial capacity is not enough to satisfy the capacity requirement for either task. Hence, excess demand is lost for both tasks. The second stage cost is
\[
v(x^1_{\alpha},x^1_{\gamma}, \tilde{\xi})=h_{\alpha}(\tilde{d}_\alpha-x^0_\alpha)+h_{\gamma}(\tilde{d}_\gamma-x^0_\gamma).
\]
\end{enumerate}

Now we are ready to state our first result.
\begin{proposition}
If  $\mathbb{P}(h_{\alpha}\geq \tilde{\delta}^1_\gamma h_{\gamma}, c_{\alpha}^2>\tilde{\delta}_{\alpha}^2h_{\alpha},  c_{\gamma}^2>\tilde{\delta}_{\gamma}^2h_{\gamma})=1,$ then,
 \begin{enumerate}
\item  any ${x^1_{\alpha}}^*\in [0,x^0_{\gamma}]$, that satisfies the equation
\begin{equation}
\mathbb{E}(\tilde{\delta^1_\alpha}; x_{\alpha}^0+\tilde{\delta}^1_{\alpha}{x_{\alpha}^1}^*\leq d_{\alpha}, x_{\gamma}^0-{x_{\alpha}^1}^*>d_{\gamma})=\displaystyle\frac{c_{\alpha}^1}{h_{\alpha}},
\label{Eq:Case1Sol1}
\end{equation}
is an optimal cross-training level to work on task $\alpha$.

 \item if  $
\mathbb{E}(\tilde{\delta^1_\alpha}; x_{\alpha}^0+\tilde{\delta}^1_{\alpha}{x_{\alpha}^1}\leq d_{\alpha}, x_{\gamma}^0-{x_{\alpha}^1}>d_{\gamma})\geq\displaystyle\frac{c_{\alpha}^1}{h_{\alpha}}\mbox{ when $x^1_{\alpha}\in[0,x^0_{\gamma}]$}$
then ${x^1_\alpha}^*=x_\gamma^0$ is the optimal cross-training level to work on task $\alpha$.

\item if $
\mathbb{E}(\tilde{\delta^1_\alpha}; x_{\alpha}^0+\tilde{\delta}^1_{\alpha}{x_{\alpha}^1}^*\leq d_{\alpha}, x_{\gamma}^0-{x_{\alpha}^1}^*>d_{\gamma})\leq\displaystyle\frac{c_{\alpha}^1}{h_{\alpha}}\mbox{ on $[0,x^0_{\gamma}]$}$
then ${x^1_\alpha}^*=0$ is the optimal cross-training level to work on task $\alpha$.
\end{enumerate}
Same result holds for ${x^1_\gamma}^*$ when $\alpha$ and $\gamma$ are interchanged above.
\label{Pr:Case1a}

\end{proposition}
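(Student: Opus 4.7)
The plan is to view the objective in \eqref{Eq:cost_function} as a function of a single variable $x^1_\alpha$ (with $x^1_\gamma$ held fixed) and derive a first-order condition by careful differentiation under the integral sign. Writing
\[
\Phi(x^1_\alpha)=c^1_\alpha x^1_\alpha + c^1_\gamma x^1_\gamma + \sum_{i=1}^{6}\int_{\Omega_i(x^1_\alpha)} v_i(x^1_\alpha,x^1_\gamma,\xi)\, f(\xi)\, d\xi,
\]
I would first inspect which $\Omega_i$ have boundaries that move with $x^1_\alpha$: $\Omega_1^b$, $\Omega_2$, and $\Omega_3$ do (through the curves $d_\alpha = x^0_\alpha+\tilde{\delta}^1_\alpha x^1_\alpha$ and $d_\gamma = x^0_\gamma-x^1_\alpha$), while $\Omega_4,\Omega_5,\Omega_6$ do not. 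Next I would check that $v$ is continuous across each of these movable boundaries, e.g.\ at $d_\alpha=x^0_\alpha+\tilde{\delta}^1_\alpha x^1_\alpha$ the $\Omega_2$-expression collapses to $0$ and matches the $\Omega_1^b$-value, and at $d_\gamma=x^0_\gamma-x^1_\alpha$ the $\Omega_2$ and $\Omega_3$ expressions agree. Continuity is the critical ingredient: it allows the Leibniz-rule boundary terms contributed by neighbouring regions to cancel in pairs, leaving only the interior derivatives.

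After these cancellations, only $\Omega_2$ contributes a nonzero integrand derivative, namely $\partial v/\partial x^1_\alpha=-h_\alpha\tilde{\delta}^1_\alpha$, so
\[
\Phi'(x^1_\alpha)=c^1_\alpha - h_\alpha\,\mathbb{E}\!\left[\tilde{\delta}^1_\alpha;\, x^0_\alpha+\tilde{\delta}^1_\alpha x^1_\alpha \le \tilde{d}_\alpha,\; x^0_\gamma-x^1_\alpha > \tilde{d}_\gamma\right].
\]
I would then argue that $\Phi$ is convex on $[0,x^0_\gamma]$ by noting that as $x^1_\alpha$ increases the region $\Omega_2$ shrinks (both its lower-$d_\alpha$ bound moves up and its upper-$d_\gamma$ bound moves down), so the nonnegative integrand $h_\alpha\tilde{\delta}^1_\alpha$ is integrated over a smaller set; hence $\Phi'$ is non-decreasing. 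Convexity reduces the optimisation to the first-order condition $\Phi'(x^1_\alpha)=0$, which is exactly \eqref{Eq:Case1Sol1}.

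Finally, the three claims are read off directly from the sign structure of $\Phi'$. If the expectation in $\Phi'$ exceeds $c^1_\alpha/h_\alpha$ throughout $[0,x^0_\gamma]$ then $\Phi'\le 0$ everywhere, so $\Phi$ is non-increasing and the optimum is at the upper corner $x^0_\gamma$; symmetrically, if the expectation never reaches $c^1_\alpha/h_\alpha$ then $\Phi'\ge 0$ and the optimum is at $0$; otherwise any interior root of $\Phi'$ is optimal. The identical argument, with $\alpha$ and $\gamma$ swapped and the roles of $\Omega_2$ and $\Omega_5$ exchanged, gives the statement for ${x^1_\gamma}^*$; this works because in Case~1.a the contributions involving $x^1_\alpha$ and $x^1_\gamma$ are separable (they appear through disjoint regions $\Omega_2$ and $\Omega_5$ respectively), so the bivariate optimisation decouples.

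The main obstacle I anticipate is the bookkeeping of the Leibniz boundary terms: although they cancel in pairs, this requires a careful enumeration of which adjacent regions share a given moving boundary and verification that the piecewise formulas for $v$ agree there. A secondary subtlety is justifying differentiation under the expectation, which follows from the dominated convergence theorem once one observes that $\partial v/\partial x^1_\alpha$ is bounded by $h_\alpha$ (since $\tilde{\delta}^1_\alpha \le 1$) on a set of finite probability.
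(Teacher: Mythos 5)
Your proposal is correct and follows essentially the same route as the paper: differentiate under the expectation via the Leibniz rule, observe that the boundary terms from $\Omega_2$ and $\Omega_3$ cancel (you justify this by continuity of $v$ across the moving boundaries, the paper by explicit computation of both terms), obtain $\Phi'(x^1_\alpha)=c^1_\alpha-h_\alpha\,\mathbb{E}[\tilde{\delta}^1_\alpha;\cdot]$, and establish convexity by noting that the region $\Omega_2$ shrinks monotonically in $x^1_\alpha$. The sign analysis for the corner solutions and the separability argument likewise match the paper's proof.
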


\begin{proof}
We derive the first order optimality conditions by setting the derivatives equal to 0. Then we need to check the second order derivatives to ensure convexity. First, we consider the partial derivative with respect to the decision variable $x_\alpha^1$. Since the bounds on productivity factors $\tilde{\delta}^1$ and $\tilde{\delta}^2$ do not depend on the decision variables, using Leibniz rule
\[
\displaystyle\frac{\partial \mathbb{E}[\mathbb{E}[g(x^1_{\alpha},x^1_{\gamma}, \tilde{\xi})|\tilde{\delta}^1, \tilde{\delta}^2]]}{\partial x^1_\alpha}=\mathbb{E}\left[\displaystyle\frac{\partial \mathbb{E}[g(x^1_{\alpha},x^1_{\gamma}, \tilde{\xi})|\tilde{\delta}^1, \tilde{\delta}^2]}{\partial x^1_\alpha}\right]=c_\alpha^1+\mathbb{E}\left[\displaystyle \sum_{i=1}^6\frac{\partial\mathbb{E}[v(x^1_{\alpha},x^1_{\gamma}, \tilde{\xi});\Omega_i|\tilde{\delta}^1, \tilde{\delta}^2]}{\partial x^1_\alpha}\right].
\]
The second stage cost function $v(x^1_{\alpha},x^1_{\gamma}, \tilde{\xi})$ is constant with respect to $x^1_\alpha$ on $\Omega_4,\Omega_5$ and $\Omega_6$ and the bounds of these regions do not depend on $x^1_\alpha$. Also on $\Omega_1$, the second stage cost is uniformly equal to 0. Hence, the derivatives of expectation over these regions are all equal to 0. To simplify the notation, we use $f(\tilde{d}_{\alpha} ,\tilde{d}_{\gamma})$ to denote the density function of demand vector when productivity factors are given. The derivative of expectation over $\Omega_2$ is
\[
\begin{array}{rl}
\displaystyle\frac{\partial\mathbb{E}[v(x^1_{\alpha},x^1_{\gamma}, \tilde{\xi});\Omega_2|\tilde{\delta}^1, \tilde{\delta}^2]}{\partial x^1_\alpha} &=-h_\alpha\tilde{\delta}_\alpha^1\displaystyle\int_{x^0_\alpha+\tilde{\delta}_\alpha^1x_{\alpha}^1}^\infty \int_{0}^{x_\gamma^0-x_{\alpha}^1}f(d_\alpha,d_{\gamma})dd_\gamma dd_{\alpha}\\
&\quad-\displaystyle\int_{x^0_\alpha+\tilde{\delta}_\alpha^1x_{\alpha}^1}^\infty h_\alpha(d_\alpha-x_\alpha^0-\tilde{\delta}_\alpha^1x_{\alpha}^1)f(d_\alpha,x_\gamma^0-x_{\alpha}^1) dd_{\alpha}
\end{array}
\]
Similarly, we can calculate the derivative of expectation over $\Omega_3$
\[
\displaystyle\frac{\partial\mathbb{E}[v(x^1_{\alpha},x^1_{\gamma}, \tilde{\xi});\Omega_3|\tilde{\delta}^1, \tilde{\delta}^2]}{\partial x^1_\alpha}=\displaystyle\int_{x^0_\alpha+\tilde{\delta}_\alpha^1x_{\alpha}^1}^\infty h_\alpha(d_\alpha-x_\alpha^0-\tilde{\delta}_\alpha^1x_{\alpha}^1)f(d_\alpha,x_\gamma^0-x_{\alpha}^1) dd_{\alpha}
\]
Canceling the boundary terms, we get
\begin{equation}\begin{array}{rl}
\displaystyle\frac{\partial \mathbb{E}[\mathbb{E}[g(x^1_{\alpha},x^1_{\gamma}, \tilde{\xi})|\tilde{\delta}^1, \tilde{\delta}^2]]}{\partial x^1_\alpha} 
&=c_\alpha^1-h_\alpha \mathbb{E}[\tilde{\delta^1_\alpha}; x_{\alpha}^0+\tilde{\delta}^1_{\alpha}x_{\alpha}^1\leq d_{\alpha}, x_{\gamma}^0-x_{\alpha}^1>d_{\gamma}].
\end{array}
\label{Eq:Case1FD}
\end{equation}
The partial derivative with respect to $x_\gamma^1$ can be found similarly. Setting these terms to equal 0, we get equation (\ref{Eq:Case1Sol1}). Now, we need to show that the ($x_\alpha^1,x_{\gamma}^1$) pairs that solve these equations, actually minimizes the expected cost by showing that expected cost function is convex in decision variables. Equation~(\ref{Eq:Case1FD}) suggests that the first partial derivative of the expected cost function with respect to $x_\alpha^1$ only depends on $x_\alpha^1$, hence we do not need to consider the cross-partials and the hessian matrix is positive semidefinite if the second partial derivatives with respect to $x_{\alpha}^1$ and $x_{\gamma}^1$ are both nonnegative.
\[
\displaystyle\frac{\partial^2 \mathbb{E}[\mathbb{E}[g(x^1_{\alpha},x^1_{\gamma}, \tilde{\xi})|\tilde{\delta}^1, \tilde{\delta}^2]]}{(\partial x^1_\alpha)^2}=-h_{\alpha}\displaystyle\frac{\partial \mathbb{E}[\tilde{\delta}_\alpha^1\displaystyle\int_{x^0_\alpha+\tilde{\delta}_\alpha^1x_{\alpha}^1}^\infty \int_{0}^{x_\gamma^0-x_{\alpha}^1}f(d_\alpha,d_{\gamma})dd_\gamma dd_{\alpha}]}{\partial x^1_\alpha}.
\]

Observe that if $x^1_\alpha<\bar{x}^1_{\alpha}$, then
\[
\{(\tilde{d}_{\alpha},\tilde{d}_\gamma):x^0_\alpha+\tilde{\delta}^1_\alpha \bar{x}^1_{\alpha}<\tilde{d}_{\alpha}, 0\leq \tilde{d}_{\gamma}\leq x_{\gamma}^0-\bar{x}_{\alpha}^1\}\subseteq \{(\tilde{d}_{\alpha},\tilde{d}_\gamma):x^0_\alpha+\tilde{\delta}^1_\alpha x^1_{\alpha}<\tilde{d}_{\alpha}, 0\leq \tilde{d}_{\gamma}\leq x_{\gamma}^0-x_{\alpha}^1\}.
\]
Using this relation and the fact that the density function and the productivity factor $\tilde{\delta}^1_\alpha$ is always positive, we get
\[
\displaystyle\lim_{\Delta\rightarrow 0}\frac{\mathbb{E}[\tilde{\delta}_\alpha^0(\int_{x^0_\alpha+\tilde{\delta}_\alpha^1(x_{\alpha}^1+\Delta)}^\infty \int_{0}^{x_\gamma^0-(x_{\alpha}^1+\Delta)}f(d_\alpha,d_{\gamma})dd_\gamma dd_{\alpha}-\int_{x^0_\alpha+\tilde{\delta}_\alpha^1x_{\alpha}^1}^\infty \int_{0}^{x_\gamma^0-x_{\alpha}^1}f(d_\alpha,d_{\gamma})dd_\gamma dd_{\alpha})]}{\Delta}\leq 0.
\]
Plugging this back into the second derivative and repeating the same procedure for $x_\gamma^1$, we conclude that the hessian is positive semidefinite and the expected cost function is convex.

When the expectation satisfies the inequality for the second case, the derivative of the cost function is negative for any value in $[0,x_\gamma^0]$. Hence, to minimize the cost, we need to set $x^{1*}_{\alpha}$ to the maximum possible value. The third case can be proven similarly. 

Since the distribution is assumed to be continuous, the derivative in (\ref{Eq:Case1FD}) is continuous in $x_{\alpha}^1$. If the expectation takes both negative and positive values over $x_{\alpha}^1\in[0,x_\gamma^0]$, then intermediate value theorem ensures us that (\ref{Eq:Case1Sol1}) will be satisfied for some $x^{1*}_{\alpha}$. Hence, the three cases stated in the proposition covers all possible situations. 
  \end{proof}

Proposition~\ref{Pr:Case1a} assumes continuous distributions. However, the results can be extended to discrete setting using a similar methodology to the newsvendor problem. 

An important feature of Proposition~\ref{Pr:Case1a} is that it suggests that the problem is separable, i.e., the optimal cross-training levels for tasks $\alpha$ and $\gamma$ can be decided separately. Hence,  we only state corollaries relating to task $\alpha$ below and similar results hold for $\gamma$ by interchanging the subscripts. An immediate consequence of the proposition is as follows.
\begin{corollary}
If the assumption of Proposition~\ref{Pr:Case1a} holds and the first-stage productivity factor $\tilde{\delta}^1_{\alpha}$ is deterministically equal to $\delta^1_\alpha$, then 
\begin{enumerate}
\item  any ${x^1_{\alpha}}^*\in [0,x^0_{\gamma}]$ that satisfies the newsvendor-type equation
\begin{equation}
\mathbb{P}( x_{\alpha}^0+\delta^1_{\alpha}x_{\alpha}^1\leq \tilde{a}_{\alpha}, x_{\gamma}^0-x_{\alpha}^1>\tilde{d}_{\gamma})=\displaystyle\frac{c_{\alpha}^1}{\delta^1_\alpha h_{\alpha}}
\label{Eq:DetDeltaCor}
\end{equation}
solves the cross-training problem.

\item if $\mathbb{P}( x_{\alpha}^0+\delta^1_{\alpha}x_{\alpha}^1\leq \tilde{d}_{\alpha}, x_{\gamma}^0-x_{\alpha}^1>\tilde{d}_{\gamma})\geq \displaystyle\frac{c_{\alpha}^1}{\delta^1_\alpha h_{\alpha}}$, then $x_{\alpha}^{1*}=x_{\gamma}^0$ is the optimal cross-training level to work on task $\alpha$.
    
\item if $\mathbb{P}( x_{\alpha}^0+\delta^1_{\alpha}x_{\alpha}^1\leq \tilde{d}_{\alpha}, x_{\gamma}^0-x_{\alpha}^1>\tilde{d}_{\gamma})\leq \displaystyle\frac{c_{\alpha}^1}{\delta^1_\alpha h_{\alpha}}$, then $x_{\alpha}^{1*}=0$ is the optimal cross-training level to work on task $\alpha$.
    \end{enumerate}
\label{Cr:DetDelta}
\end{corollary}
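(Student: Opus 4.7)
The plan is to obtain Corollary~\ref{Cr:DetDelta} as a direct specialization of Proposition~\ref{Pr:Case1a} rather than re-deriving the optimality conditions from scratch. Since the structural assumptions of Case~1.a are inherited intact (the corollary only strengthens the distributional hypothesis on $\tilde{\delta}^1_\alpha$) and convexity of the expected cost has already been argued in the proof of the proposition, all that remains is to simplify the characterization (\ref{Eq:Case1Sol1}) and the two companion inequalities under the assumption $\tilde{\delta}^1_\alpha\equiv \delta^1_\alpha$ almost surely.

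The single reduction to carry out is the observation that $\mathbb{E}[\tilde{\delta}^1_\alpha;\,A]=\delta^1_\alpha\mathbb{P}(A)$ whenever $\tilde{\delta}^1_\alpha$ is deterministic and $A$ is measurable with respect to $(\tilde{d}_\alpha,\tilde{d}_\gamma,\tilde{\delta}^1_\alpha)$. Applied to the event $\{x^0_\alpha+\tilde{\delta}^1_\alpha x^1_\alpha\le \tilde{d}_\alpha,\ x^0_\gamma-x^1_\alpha>\tilde{d}_\gamma\}$ appearing in (\ref{Eq:Case1Sol1}), this pulls $\delta^1_\alpha$ outside the expectation. Dividing the resulting equation by the positive constant $\delta^1_\alpha$, which is legitimate because the modeling assumption guarantees $0<\delta^1_\alpha\le 1$, yields the classical newsvendor-type equation (\ref{Eq:DetDeltaCor}) with critical ratio $c^1_\alpha/(\delta^1_\alpha h_\alpha)$. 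The same manipulation converts the two boundary inequalities of the proposition into the two boundary cases of the corollary, since dividing an inequality by a positive constant preserves its direction.

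I expect no substantive obstacle in this argument; the corollary is essentially a rewriting of Proposition~\ref{Pr:Case1a} in the familiar newsvendor fractile form. The only minor items worth flagging explicitly are (i) the strict positivity of $\delta^1_\alpha$ used when dividing, and (ii) the fact that the continuity-plus-intermediate-value-theorem argument that guarantees existence of a solution to (\ref{Eq:Case1Sol1}) on $[0,x^0_\gamma]$ continues to apply verbatim for (\ref{Eq:DetDeltaCor}), because the left-hand side is a continuous function of $x^1_\alpha$ under the assumed continuity of the joint demand density.
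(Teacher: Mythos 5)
Your proposal is correct and matches the paper's intent exactly: the paper states this corollary as an ``immediate consequence'' of Proposition~\ref{Pr:Case1a} with no written proof, and the intended argument is precisely your specialization --- pulling the deterministic $\delta^1_\alpha$ out of the expectation in (\ref{Eq:Case1Sol1}) and dividing by it (using $0<\delta^1_\alpha\le 1$) to obtain the newsvendor fractile form, with the two boundary cases following by the same manipulation.
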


Proposition~\ref{Pr:Case1a} also helps us understand when it is not profitable to cross-train. 
\begin{corollary}
It is not profitable to cross-train for task $\alpha$, if the assumption of Proposition~\ref{Pr:Case1a} holds and at least one of  $\mathbb{E}[\tilde{\delta}_{\alpha}^1]$, $\mathbb{P}(x_{\alpha}^0\leq \tilde{d}_{\alpha})$ or $\mathbb{P}(x_{\gamma}^0\leq \tilde{d}_{\gamma})$ is less than $\displaystyle\frac{c_{\alpha}^1}{h_{\alpha}}$.
\end{corollary}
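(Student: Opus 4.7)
The plan is to invoke case~3 of Proposition~\ref{Pr:Case1a}, which guarantees $x_\alpha^{1*}=0$ whenever
\[
E(x_\alpha^1)\;:=\;\mathbb{E}\bigl(\tilde{\delta}^1_\alpha;\ x_\alpha^0+\tilde{\delta}^1_\alpha x_\alpha^1\leq \tilde{d}_\alpha,\ x_\gamma^0-x_\alpha^1>\tilde{d}_\gamma\bigr)\;\leq\;\frac{c_\alpha^1}{h_\alpha}
\]
holds uniformly on $x_\alpha^1\in[0,x_\gamma^0]$. The goal therefore reduces to producing, under each of the three hypotheses, an upper bound on $E(x_\alpha^1)$ that is independent of $x_\alpha^1$ and matches the asserted threshold.

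For the first hypothesis, I would simply drop both inequalities defining the event of integration: since $\tilde{\delta}^1_\alpha\geq 0$, monotonicity of expectation gives $E(x_\alpha^1)\leq \mathbb{E}[\tilde{\delta}^1_\alpha]$, so $\mathbb{E}[\tilde{\delta}^1_\alpha]<c_\alpha^1/h_\alpha$ closes the case. For the other two hypotheses I would exploit the structural bound $\tilde{\delta}^1_\alpha\leq 1$ built into the model, writing $E(x_\alpha^1)\leq \mathbb{P}(\Omega)$ where $\Omega$ is the conditioning event, and then enlarging $\Omega$ by discarding one of the two defining constraints. Specifically, using $\tilde{\delta}^1_\alpha x_\alpha^1\geq 0$ one gets $\Omega\subseteq\{x_\alpha^0\leq\tilde{d}_\alpha\}$, and using $x_\alpha^1\geq 0$ one gets $\Omega\subseteq\{x_\gamma^0-x_\alpha^1>\tilde{d}_\gamma\}\subseteq\{x_\gamma^0>\tilde{d}_\gamma\}$; continuity of the joint demand distribution then lets these probabilities be identified with the marginal quantities stated in the corollary.

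The whole argument is a few lines of monotonicity of expectation plus the productivity-factor bound $0<\tilde{\delta}^1_\alpha\leq 1$, so there is no genuine obstacle. The only mildly delicate point is that all three bounds must be shown \emph{uniformly} in $x_\alpha^1\in[0,x_\gamma^0]$, but since the right-hand sides of the three inequalities involve neither $x_\alpha^1$ nor $x_\gamma^1$ after the enlargement step, uniformity is automatic once the pointwise bounds are established. Hence case~3 of Proposition~\ref{Pr:Case1a} applies and $x_\alpha^{1*}=0$, meaning cross-training for task $\alpha$ is unprofitable.
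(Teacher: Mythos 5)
Your overall strategy is exactly the paper's: invoke case~3 of Proposition~\ref{Pr:Case1a} and bound the restricted expectation uniformly in $x_\alpha^1$, dropping the indicator via $\tilde{\delta}^1_\alpha\geq 0$ for the first hypothesis, and replacing $\tilde{\delta}^1_\alpha$ by $1$ and enlarging the conditioning event for the other two. The first two cases are correct and coincide with the paper's (very terse) argument.

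The third case, however, contains a step that does not hold. Your inclusions give
\[
\mathbb{E}\bigl(\tilde{\delta}^1_\alpha;\ x_\alpha^0+\tilde{\delta}^1_\alpha x_\alpha^1\leq \tilde{d}_\alpha,\ x_\gamma^0-x_\alpha^1>\tilde{d}_\gamma\bigr)\;\leq\;\mathbb{P}(\tilde{d}_\gamma< x_\gamma^0),
\]
whereas the corollary's hypothesis concerns $\mathbb{P}(x_\gamma^0\leq\tilde{d}_\gamma)$, the \emph{complement} of the event you arrive at. Continuity only lets you trade a strict inequality for a non-strict one on the same side; it cannot reverse its direction, and since $\mathbb{P}(\tilde{d}_\gamma<x_\gamma^0)+\mathbb{P}(x_\gamma^0\leq\tilde{d}_\gamma)=1$, the claimed ``identification'' is false. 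What your derivation actually establishes is that cross-training for $\alpha$ is unprofitable when $\mathbb{P}(\tilde{d}_\gamma\leq x_\gamma^0)<c_\alpha^1/h_\alpha$, i.e., when $\gamma$'s demand almost always exceeds its dedicated capacity so the cross-trained workers are rarely free to help at $\alpha$. That is precisely the condition described in the paper's own discussion following the corollary, and the printed statement appears to have the inequality reversed: with $\tilde{d}_\gamma\equiv 0$ and $\tilde{d}_\alpha$ large one has $\mathbb{P}(x_\gamma^0\leq\tilde{d}_\gamma)=0<c_\alpha^1/h_\alpha$ while cross-training is plainly profitable. The paper's proof (``use the same methodology realizing $\tilde{\delta}^1_\alpha\leq 1$'') never works this case out and so does not expose the issue. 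Your bound is the right one, but the closing sentence asserting it matches the stated marginal quantity is a genuine gap: you should either prove the corrected statement with $\mathbb{P}(\tilde{d}_\gamma\leq x_\gamma^0)$ or explicitly flag the discrepancy rather than paper over it with continuity.
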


\begin{proof}
Using the fact that $\tilde{\delta}_\alpha^1\geq 0$ with probability one, we get
\[
\mathbb{E}[\tilde{\delta^1_\alpha}; x_{\alpha}^0+\tilde{\delta}^1_{\alpha}{x_{\alpha}^1}^*\leq \tilde{d}_{\alpha}, x_{\gamma}^0-{x_{\alpha}^1}^*>\tilde{d}_{\gamma}]\leq\mathbb{E}[\tilde{\delta^1_\alpha}]\leq \frac{c_{\alpha}^1}{h_{\alpha}}.
\]
Using the third part of Proposition~\ref{Pr:Case1a} the result follows. To prove the second and third parts of the corollary, we use the same methodology realizing  $\tilde{\delta}^1_\alpha\leq 1$ with probability one.
  \end{proof}

Intuitively, one should not resort to cross-training if the effectiveness of training programs is not good enough. The first part of the corollary quantifies how ``good enough'' should be understood. The second part says that before cross-training one should make sure that there is a solid chance that extra capacity will be needed. Even when extra capacity is needed, demand for the other task may make it impossible to utilize the cross-trained workers. 

Under some mild conditions, we can use Proposition~\ref{Pr:Case1a} to infer how the variance of the demand for different tasks affect the cross-training decisions. 
\begin{corollary}
Suppose $\tilde{d}_1$ and $\tilde{d}_2$ are symmetric random variables around zero, i.e., $$\mathbb{P}(\tilde{d}_i>x)=\mathbb{P}(\tilde{d}_i<-x)$$ for all values of $x$ and $i=1,2$. If $\tilde{\delta}^1_{\alpha}$ is deterministically equal to $\delta^1_\alpha$, $2c_{\alpha}^1\geq\delta_1^\alpha h_{\alpha}$, the demands for different tasks, $\tilde{d}_{\alpha}$ and $\tilde{d}_{\gamma}$, are independent, continuous and can be written as 
\[\tilde{d}_{\alpha}=m\tilde{d}_1+\mu_1 \mbox{ and } \tilde{d}_{\gamma}=n\tilde{d}_2+\mu_2,\]
then the optimal cross-training level $x_{\alpha}^{1*}$ is non-increasing in both $m$ and $n$.
\label{Cr:Var}
\end{corollary}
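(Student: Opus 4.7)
The plan is to invoke Corollary~\ref{Cr:DetDelta} and exploit the symmetry assumption on $\tilde d_1,\tilde d_2$ together with the cost-ratio condition $2c^1_\alpha\geq \delta^1_\alpha h_\alpha$ to localize the optimum in a regime where comparative statics are transparent. By independence of $\tilde d_\alpha$ and $\tilde d_\gamma$ and the parametric substitutions $\tilde d_\alpha = m\tilde d_1+\mu_1$, $\tilde d_\gamma = n\tilde d_2+\mu_2$, the characterizing equation from Corollary~\ref{Cr:DetDelta} becomes $\Phi(x^{1*}_\alpha;m,n)=K$, where
\[
\Phi(x;m,n) := \mathbb{P}\!\left(\tilde d_1 \geq \frac{x^0_\alpha + \delta^1_\alpha x - \mu_1}{m}\right)\mathbb{P}\!\left(\tilde d_2 < \frac{x^0_\gamma - x - \mu_2}{n}\right), \qquad K := \frac{c^1_\alpha}{\delta^1_\alpha h_\alpha}.
\]
Both factors are non-increasing in $x$, so $\Phi(\cdot;m,n)$ is non-increasing in $x$.

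At an interior optimum, $\Phi(x^{1*}_\alpha;m,n)=K\geq \tfrac12$ by hypothesis, so both factors, being bounded by $1$, must lie in $[K,1]\subseteq[\tfrac12,1]$. By the symmetry and continuity of $\tilde d_1,\tilde d_2$ around zero, $\mathbb{P}(\tilde d_1\geq a)\geq \tfrac12 \iff a\leq 0$ and $\mathbb{P}(\tilde d_2< b)\geq \tfrac12 \iff b\geq 0$, so the numerators automatically satisfy
\[
x^0_\alpha+\delta^1_\alpha x^{1*}_\alpha-\mu_1 \leq 0 \qquad \text{and}\qquad x^0_\gamma - x^{1*}_\alpha - \mu_2\geq 0.
\]
This is the key sign information delivered by the assumption $2c^1_\alpha \geq \delta^1_\alpha h_\alpha$.

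Now fix $x=x^{1*}_\alpha$ and replace $m$ by $m'>m$. Because the first numerator is non-positive, dividing it by a larger $m'$ pushes the argument of the first probability closer to $0$ from below, weakly decreasing the first factor; the second factor is unchanged. Therefore $\Phi(x^{1*}_\alpha;m',n)\leq K$, and since $\Phi(\cdot;m',n)$ is non-increasing in $x$, the new interior solution to $\Phi(\cdot;m',n)=K$ lies at some $x\leq x^{1*}_\alpha$. The argument for $n$ is symmetric: by the sign condition $x^0_\gamma-x^{1*}_\alpha-\mu_2\geq 0$, increasing $n$ weakly decreases the second factor while leaving the first alone, giving the same conclusion.

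The remaining work is to dispatch the boundary cases from Corollary~\ref{Cr:DetDelta}, and this is where the most care is needed. If $x^{1*}_\alpha(m,n)=x^0_\gamma$, the monotonicity claim is trivial since $x^{1*}_\alpha\in[0,x^0_\gamma]$ in all cases. If $x^{1*}_\alpha(m,n)=0$, i.e., $\Phi(\cdot;m,n)\leq K$ on $[0,x^0_\gamma]$, I would show the same for $(m',n)$ by splitting on the sign of $x^0_\alpha+\delta^1_\alpha x - \mu_1$: where this is non-positive the previous monotonicity gives $\Phi(x;m',n)\leq \Phi(x;m,n)\leq K$; where it is positive the first factor is strictly below $\tfrac12$ for every $m>0$, so the product is strictly below $\tfrac12\leq K$. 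Hence $\Phi(\cdot;m',n)\leq K$ on $[0,x^0_\gamma]$ and part~(3) of Corollary~\ref{Cr:DetDelta} gives $x^{1*}_\alpha(m',n)=0$. The main obstacle in the whole argument is this translation of the cost-ratio hypothesis through symmetry into the sign conditions on the two numerators; once those are secured, the comparative statics reduces to routine monotonicity bookkeeping.
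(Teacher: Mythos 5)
Your proof is correct and follows essentially the same route as the paper's: factor the newsvendor equation of Corollary~\ref{Cr:DetDelta} using independence, use $c^1_\alpha/(\delta^1_\alpha h_\alpha)\geq 1/2$ together with the symmetry of $\tilde{d}_1,\tilde{d}_2$ to pin down the signs of the two numerators, and then read off the monotonicity of the left-hand side in $m$ and $n$. Your handling of the boundary cases $x^{1*}_\alpha=0$ and $x^{1*}_\alpha=x^0_\gamma$ is in fact slightly more explicit than the paper's.
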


\begin{proof}
Using equation (\ref{Eq:DetDeltaCor}) and independence, we get
\[\begin{array}{rl}
\mathbb{P} \left(\displaystyle\frac{x_{\alpha}^0+\delta^1_{\alpha}{x_{\alpha}^1}^*-\mu_1}{m}\leq \tilde{d}_{1}\right) \mathbb{P}\left(\displaystyle\frac{x_{\gamma}^0-{x_{\alpha}^1}^*-\mu_2}{n}>\tilde{d}_{2}\right)&=\displaystyle\frac{c_{\alpha}^1}{\delta_1^\alpha h_{\alpha}}\geq \displaystyle \frac{1}{2}.
\end{array}
\]
Now, we can infer that the probabilities on the left-hand side of the inequality should be greater than 0.5. Then, using the fact that $\tilde{d}_1$ and $\tilde{d}_2$ are symmetric random variables
\begin{equation}
\frac{x_{\alpha}^0+\delta^1_{\alpha}{x_{\alpha}^1}^*-\mu_1}{m}\leq 0 \mbox{ and } \frac{x_{\gamma}^0-{x_{\alpha}^1}^*-\mu_2}{n}\geq 0.
\label{Eq:moninequal}
\end{equation}
The left-hand side of equation (\ref{Eq:DetDeltaCor}) decreases as $m$ increases. Hence, if statement 1 of Corollary~\ref{Cr:DetDelta} is true, we need to decrease $x^{1*}_{\alpha}$ to recover the equality. If statement 2 is true, we may either wish to stay at $x^0_{\gamma}$ or we may wish to decrease $x^1_{\alpha}$. For the third statement, we do not need to take any action. Hence, this proves that the optimal cross-training level is non-increasing in $m$. Similar arguments show that $x^{1*}_{\alpha}$ is non-increasing in $n$.
  \end{proof}

Corollary~\ref{Cr:Var} essentially states that if demands for products are independent and follow a symmetric distribution, e.g. normal or uniform distributon, and if the costs and productivity factors satisfy the conditions above, the optimal cross-training level for task $\alpha$ is decreasing with the variances of demands. Unfortunately, when $\tilde{\delta}_\alpha^1$ is not deterministic, we can construct counter-examples where the monotonicity result does not hold. When $\tilde{\delta}_\alpha^1$ is random, the first inequality of (\ref{Eq:moninequal}) may fail to hold for some realizations of $\tilde{\delta}_\alpha^1$ and these may force us to increase the cross-training level as the variance increases. In Section~\ref{Sec:NumRes}, we provide examples to show that we can lose monotonicity, if $2c_{\alpha}^1<\tilde{\delta}_1^\alpha h_{\alpha}$. The next corollary shows how the cross-training policies are affected by the variances of first-stage productivity factors.

\begin{corollary}
Suppose $\tilde{\delta}$ is a symmetric random variable around zero with support $\Omega_{\delta}$, and 
$\tilde{\delta}_\alpha^1=\Delta+n\tilde{\delta}$. If $\mathbb{P}(h_{\alpha}\geq \tilde{\delta}^1_\gamma h_{\gamma}, c_{\alpha}^1>\tilde{\delta}_{\alpha}^2h_{\alpha},  c_{\gamma}^1>\tilde{\delta}_{\gamma}^2h_{\gamma})=1$, and  $\tilde{d}_\alpha$ and $\tilde{d}_{\gamma}$ are independent, then the optimal cross-training level $x^{1*}$ is non-increasing in $n$. 
\end{corollary}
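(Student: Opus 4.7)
The plan mirrors that of Corollary~\ref{Cr:Var}: take the first-order characterization from Proposition~\ref{Pr:Case1a} as the starting point, show that its left-hand side is non-increasing in both the decision variable and the parameter $n$, and conclude via a comparative statics argument. The hypothesis places us in Case~1.a, so Proposition~\ref{Pr:Case1a} applies: the optimal $x^{1*}$ either sits at one of the boundary values $\{0,x^0_\gamma\}$ or satisfies the interior equation
$$\Lambda(x,n):=\mathbb{E}\bigl[\tilde{\delta}^1_\alpha;\, x^0_\alpha+\tilde{\delta}^1_\alpha x\le \tilde{d}_\alpha,\, x^0_\gamma - x>\tilde{d}_\gamma\bigr]=\frac{c^1_\alpha}{h_\alpha}.$$
Using the independence of $\tilde{d}_\alpha$ and $\tilde{d}_\gamma$ together with the standing independence of $\tilde{\delta}^1_\alpha$ from the demand vector, this factors as $\Lambda(x,n)=\mathbb{P}(\tilde{d}_\gamma<x^0_\gamma-x)\cdot M(x,n)$, where
$$M(x,n):=\mathbb{E}\bigl[(\Delta+n\tilde{\delta})\,\bar F_\alpha\bigl(x^0_\alpha+(\Delta+n\tilde{\delta})x\bigr)\bigr],\qquad \bar F_\alpha(t):=\mathbb{P}(\tilde{d}_\alpha\ge t).$$
The first factor is independent of $n$ and non-increasing in $x$, so the whole problem reduces to showing that $M(x,n)$ is non-increasing in $n$ for each fixed $x\in[0,x^0_\gamma]$.

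The core step is this monotonicity of $M$ in $n$. Using $\tilde{\delta}\stackrel{d}{=}-\tilde{\delta}$, the integrand can be symmetrized by pairing $\tilde{\delta}$ with $-\tilde{\delta}$, giving
$$2M(x,n)=\mathbb{E}\bigl[(\Delta+n\tilde{\delta})\bar F_\alpha(t_+)+(\Delta-n\tilde{\delta})\bar F_\alpha(t_-)\bigr],\qquad t_\pm:=x^0_\alpha+(\Delta\pm n\tilde{\delta})x.$$
Rewriting this as $2\Delta\,\mathbb{E}[\bar F_\alpha(t_+)]+n\,\mathbb{E}[\tilde{\delta}(\bar F_\alpha(t_+)-\bar F_\alpha(t_-))]$, the cross-term is pointwise non-positive: on $\tilde{\delta}>0$ we have $t_+>t_-$ and hence $\bar F_\alpha(t_+)-\bar F_\alpha(t_-)\le 0$, with the symmetric statement on $\tilde{\delta}<0$. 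Viewing $M$ as $\mathbb{E}[H_x(\Delta+n\tilde{\delta})]$ with $H_x(u)=u\,\bar F_\alpha(x^0_\alpha+ux)$ and $\Delta+n\tilde{\delta}$ a symmetric law around $\Delta$ whose dispersion grows with $n$, the same $\pm\tilde{\delta}$ pairing applied to $\partial M/\partial n$ gives $M(x,n_2)\le M(x,n_1)$ whenever $n_1\le n_2$.

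Combining, $\Lambda$ is non-increasing in $x$ (from the convexity argument in the proof of Proposition~\ref{Pr:Case1a}) and non-increasing in $n$ (by the step above). In the interior case, if $x^{1*}(n)$ solves $\Lambda(x^{1*},n)=c^1_\alpha/h_\alpha$ and $n'>n$, then $\Lambda(x^{1*}(n),n')\le c^1_\alpha/h_\alpha$, so the new root must satisfy $x^{1*}(n')\le x^{1*}(n)$. The two boundary cases fall out: if $\Lambda\le c^1_\alpha/h_\alpha$ throughout $[0,x^0_\gamma]$ at $n$, the inequality only strengthens at $n'$ and the optimum stays at $0$; if $\Lambda\ge c^1_\alpha/h_\alpha$ throughout at $n$ (so $x^{1*}(n)=x^0_\gamma$), the optimum at $n'$ can only decrease. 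The principal obstacle is the monotonicity of $M$ in $n$: the $\pm\tilde{\delta}$ pairing cleanly disposes of the sign of the cross-term via monotonicity of $\bar F_\alpha$, but the density-weighted boundary term that arises from Leibniz differentiation of $M$ has ambiguous sign a priori, and its control requires combining the symmetry of $\tilde{\delta}$ with a second application of pairing (or an integration-by-parts in the demand variable).
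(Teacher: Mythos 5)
Your route is the same as the paper's: reduce to the interior first-order condition of Proposition~\ref{Pr:Case1a} (equation~(\ref{Eq:Case1Sol1})), use independence to factor its left-hand side into a $\gamma$-probability that does not involve $n$ times the $\alpha$-factor $M(x,n)$, show $M$ is non-increasing in $n$, and finish with the comparative-statics sweep over the interior and boundary cases (that last part you execute correctly and it matches the paper's one-line conclusion). The paper proves the monotonicity of $M$ in $n$ by Leibniz differentiation, splitting $\partial M/\partial n$ into an integrand term and a boundary term. Your $\pm\tilde\delta$ pairing of $\mathbb{E}[\tilde\delta\,\bar F_\alpha(t_+)]$ is exactly the paper's integrand term in disguise (the paper kills it by noting that the integral of a mean-zero $\tilde\delta$ over a set of the form $\{\tilde\delta<c\}$ is non-positive; the two arguments are interchangeable here). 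The problem is the boundary term, which you name but do not control, and which is the entire substance of the step.

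Concretely, writing $f_\alpha=-\bar F_\alpha'$ for the density of $\tilde d_\alpha$, one has
\[
\frac{\partial M}{\partial n}=\mathbb{E}\bigl[\tilde\delta\,\bar F_\alpha(t_+)\bigr]-x\,\mathbb{E}\bigl[\tilde\delta(\Delta+n\tilde\delta)f_\alpha(t_+)\bigr],
\]
so after your first pairing what remains is to show $B:=\mathbb{E}\bigl[\tilde\delta(\Delta+n\tilde\delta)f_\alpha(t_+)\bigr]\geq 0$. The ``second application of pairing'' you invoke gives
\[
2B=n\,\mathbb{E}\bigl[\tilde\delta^2\bigl(f_\alpha(t_+)+f_\alpha(t_-)\bigr)\bigr]+\Delta\,\mathbb{E}\bigl[\tilde\delta\bigl(f_\alpha(t_+)-f_\alpha(t_-)\bigr)\bigr];
\]
the first summand is non-negative, but the second is sign-indefinite --- monotonicity of $\bar F_\alpha$ says nothing about $f_\alpha(t_+)-f_\alpha(t_-)$, and if $f_\alpha$ is decreasing over the relevant range that summand is non-positive on every paired realization and can dominate. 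So the repair you sketch does not close the argument, and integration by parts in $d_\alpha$ only regenerates the $\bar F_\alpha$ term you have already used. The paper disposes of this boundary term by asserting that the integrand $\delta(\Delta+n\delta)f(\cdot,\delta)$ is pointwise non-negative over $\Omega_\delta$ --- which is precisely the claim you would need, and which is automatic only on $\{\delta\geq 0\}$. As written, your proposal establishes the sign of one of the two pieces of $\partial M/\partial n$ and explicitly leaves the other open, so the central claim $M(x,n_2)\leq M(x,n_1)$ for $n_1\leq n_2$, and with it the corollary, is not proven.
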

 
\begin{proof} Using independence, we can write (\ref{Eq:Case1Sol1}) as
\begin{equation}
\left(\int_{\Omega_{\delta}}\int_{x^0_\alpha+(\Delta+n\delta)x^{1*}_{\alpha}}^\infty(\Delta+n\delta)f_{\tilde{d}_{\alpha},\tilde{\delta}}(d_{\alpha},\delta)dd_\alpha d\delta\right)\mathbb{P}(x_{\gamma}^0-x^{1*}_{\alpha}<\tilde{d}_{\gamma})=\frac{c^1_{\alpha}}{h_{\alpha}}.
\label{Eq:FSvarinc}
\end{equation}
First, we note that both the first and second multiplier on the left-hand side are non-increasing in $x^{1*}_{\alpha}$ for any value of $n$. Taking the derivative of the first multiplier on the left-hand side with respect to $n$, we get
\[\begin{array}{l}
\displaystyle\frac{d\left(\int_{\Omega_{\delta}}\int_{x^0_\alpha+(\Delta+n\delta)x^{1*}_{\alpha}}^\infty(\Delta+n\delta)f_{\tilde{d}_{\alpha}, \tilde{\delta}}(d_{\alpha},\delta)dd_{\alpha}d\delta\right)}{dn}=\displaystyle \int_{0}^\infty\int_{\Omega_{\delta}\cap\{\delta<\frac{d_\alpha-x^0_\alpha-x^{1*}_{\alpha}\Delta}{x^{1*}n}\}}\delta f_{\tilde{d}_{\alpha}, \tilde{\delta}}(d_{\alpha},\delta)d\delta dd_{\alpha}\\
\quad\quad\quad\quad\quad\quad\quad\quad\quad\quad\quad\quad\quad\quad\quad\quad\quad\quad\quad\displaystyle -\int_{\Omega_{\delta}}x^{1*}_{\alpha}\delta(\Delta+n\delta)f_{\tilde{d}_{\alpha}, \tilde{\delta}}(x^0_\alpha+(\Delta+n\delta)x^{1*}_{\alpha},\delta)d\delta.
\end{array}
\]
Using $\mathbb{E}[\tilde{\delta}]=0$, we can infer that the first term is non-positive. Also, as all the integrands in the second term are non-negative, we can conclude that the derivative is non-positive. Thus, the first multiplier in (\ref{Eq:FSvarinc}) is non-increasing with respect to $n$ and if it decreases as $n$ increases, we need to decrease $x^{1*}$ to restore the equality in (\ref{Eq:FSvarinc}).   
\end{proof}

We construct the proof based on the assumption that cross-training will not be beneficial in the second stage. However, if the second-stage productivity is preferable, then we might lose the monotonicity of the first-stage cross-training with respect to the variance of the productivity factor. Examples of such cases are demonstrated in Section~\ref{Sec:NumRes}.
\vspace{-0.1in}

\subsubsection{Case 1.b: $c_{\alpha}^2\leq \tilde{\delta}_{\alpha}^2h_{\alpha}$ and $ c_{\gamma}^2\leq \tilde{\delta}_{\gamma}^2h_{\gamma}$.}
\vspace{-0.1in}

Now, we consider the situation where it is profitable to cross-train in the second stage. The demand is lost only when the available workforce is not able to satisfy the demand even after cross-training all the free workers. Hence, the cost function will differ only when there is a trade-off between the second stage cross-training and losing demand, i.e., the cost function does not change for $\Omega_1, \Omega_3,\Omega_4$ and $\Omega_6$ and we only need to consider $\Omega_2$ and $\Omega_5$ further. 

\begin{enumerate}

\item $\Omega_2^a=\{(\tilde{d}_{\alpha},\tilde{d}_\gamma): x^0_\alpha+\tilde{\delta}^1_\alpha x^1_{\alpha}<\tilde{d}_{\alpha}\leq x^0_\alpha+\tilde{\delta}^1_\alpha x^1_{\alpha}+\tilde{\delta}^2_\alpha(x^0_\gamma-x^1_{\alpha}-\tilde{d}_{\gamma}), \tilde{d}_{\gamma}< x^0_\gamma-x^1_{\alpha}\}$. If $(\tilde{d}_{\alpha},\tilde{d}_\gamma)\in \Omega_2^a$, both the initial workforce and the cross-trained workforce are used in performing task $\alpha$. If $(\tilde{d}_{\alpha}-x^0_\alpha-\tilde{\delta}^1_\alpha x^1_{\alpha})/\tilde{\delta}^2_\alpha$ units of the workforce are cross-trained from task $\gamma$, the remaining demand can be satisfied. Hence, the second stage cross-training cost is \[v(x^1_{\alpha},x^1_{\gamma}, \tilde{\xi})=c^2_\alpha\frac{\tilde{d}_{\alpha}-x^0_\alpha-\tilde{\delta}^1_\alpha x^1_{\alpha}}{\tilde{\delta}^2_\alpha}.\]

\item $\Omega_2^b=\{(\tilde{d}_{\alpha},\tilde{d}_\gamma): x^0_\alpha+\tilde{\delta}^1_\alpha x^1_{\alpha}+\tilde{\delta}^2_\alpha(x^0_{\gamma}-x^1_{\alpha}-\tilde{d}_{\gamma})<\tilde{d}_{\alpha}, \tilde{d}_{\gamma}< x^0_\gamma-x^1_{\alpha}\}$. For the scenarios in $\Omega_2^b$, it is not possible to satisfy the demand for $\alpha$ even after all the cross-trained workforce  work on task $\alpha$. The decision maker cross-trains the idle workforce of $\gamma$ and then excess demand is lost. Hence, over $\Omega_2^b$ 
\[
v(x^1_{\alpha},x^1_{\gamma}, \tilde{\xi})=c^2_\alpha(x^0_\gamma-x^1_\alpha-\tilde{d}_\gamma)+h_\alpha(\tilde{d}_\alpha-x_{\alpha}-\tilde{\delta}^1_\alpha x^1_{\alpha}-\tilde{\delta}^2_\alpha(x^0_{\gamma}-x^1_{\alpha}-\tilde{d}_{\gamma})).
\]
\end{enumerate}
The structure for $\Omega_5^a$ and $\Omega_5^b$ is similar and we are ready to state the allocations for this case.
\begin{proposition}
If $\mathbb{P}(h_{\alpha}\geq \tilde{\delta}^1_\gamma h_{\gamma}, c_{\alpha}^2\leq\tilde{\delta}_{\alpha}^2h_{\alpha},  c_{\gamma}^2\leq\tilde{\delta}_{\gamma}^2h_{\gamma})=1,$ and the training effectiveness for programs before and after demand is realized are the same, i.e., $\tilde{\delta}^1_\alpha=\tilde{\delta}^2_\alpha$  with probability one, then optimal cross-training decisions do not depend on the opportunity cost $h_\alpha$ and 
\begin{enumerate}
\item any solution ${x^1_{\alpha}}^*$ that satisfies the newsvendor-type equation
\begin{equation}
\mathbb{P}(\tilde{d}_\alpha\geq x^0_\alpha+\tilde{\delta}^1_\alpha {x^1_{\alpha}}^*, \tilde{d}_\gamma \leq x^0_\gamma-{x^1_{\alpha}}^*)=\displaystyle\frac{c^1_\alpha}{c^2_\alpha}
\label{Eq:Newsvendor}
\end{equation}
is an optimal cross-training level to work on task $\alpha$.

\item if $\mathbb{P}(\tilde{d}_\alpha\geq x^0_\alpha+\tilde{\delta}^1_\alpha x^0_\gamma, \tilde{d}_\gamma= 0)\geq\displaystyle\frac{c^1_\alpha}{c^2_\alpha}$, then $x_\alpha^{1*}=x_\gamma^0$ is the optimal cross-training level to work on task $\alpha$.
    
\item if $\mathbb{P}(\tilde{d}_\alpha\geq x^0_\alpha, \tilde{d}_\gamma= x_\gamma^0)\leq \displaystyle\frac{c^1_\alpha}{c^2_\alpha}$, then $x_\alpha^{1*}=0$ is the optimal cross-training level to work on task $\alpha$.
\end{enumerate}
The same result holds for task $\gamma$ with $\alpha$ and $\gamma$ interchanged above.
\label{Pr:Case1b}
\end{proposition}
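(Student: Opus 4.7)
The plan is to repeat the differentiation-under-the-integral strategy used for Proposition~\ref{Pr:Case1a}, taking advantage of the fact that all boundaries of the partition $\Omega_1,\ldots,\Omega_6$ are Lipschitz in the decision variables and the integrand $v$ is continuous across them, so Leibniz boundary contributions telescope. First I would write $\mathbb{E}[v(x^1_\alpha,x^1_\gamma,\tilde{\xi})]=\mathbb{E}[\mathbb{E}[v\mid\tilde{\delta}^1,\tilde{\delta}^2]]$ and split the inner conditional expectation over $\Omega_1,\Omega_2^a,\Omega_2^b,\Omega_3,\Omega_4,\Omega_5^a,\Omega_5^b,\Omega_6$. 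On $\Omega_1,\Omega_4,\Omega_5^a,\Omega_5^b,\Omega_6$ the integrand $v$ is independent of $x^1_\alpha$; on $\Omega_3$ the only $x^1_\alpha$-dependence is through the boundary $\tilde{d}_\gamma=x^0_\gamma-x^1_\alpha$, and since $v$ agrees with the value from the adjacent $\Omega_2^b$ on that edge (a direct plug-in check), the Leibniz boundary term cancels with the corresponding term produced when differentiating the $\Omega_2^b$-integral.

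The heart of the argument is the calculation on $\Omega_2^a\cup\Omega_2^b$. Differentiating the stated forms of $v$ and imposing $\tilde{\delta}^1_\alpha=\tilde{\delta}^2_\alpha$ gives $\partial v/\partial x^1_\alpha=-c^2_\alpha$ on $\Omega_2^a$, and on $\Omega_2^b$ one obtains $-c^2_\alpha-h_\alpha\tilde{\delta}^1_\alpha+h_\alpha\tilde{\delta}^2_\alpha=-c^2_\alpha$; this is precisely where the opportunity cost $h_\alpha$ drops out of the first-order condition. The common boundary between $\Omega_2^a$ and $\Omega_2^b$, namely $\tilde{d}_\alpha=x^0_\alpha+\tilde{\delta}^1_\alpha x^1_\alpha+\tilde{\delta}^2_\alpha(x^0_\gamma-x^1_\alpha-\tilde{d}_\gamma)$, carries matching values of $v$ (both equal $c^2_\alpha(x^0_\gamma-x^1_\alpha-\tilde{d}_\gamma)$), and the boundary $\tilde{d}_\alpha=x^0_\alpha+\tilde{\delta}^1_\alpha x^1_\alpha$ shared with $\Omega_1^b$ also matches ($v=0$). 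Collecting everything, the interior terms give
\[
\frac{\partial\mathbb{E}[g]}{\partial x^1_\alpha}=c^1_\alpha-c^2_\alpha\,\mathbb{P}\!\left(\tilde{d}_\alpha\geq x^0_\alpha+\tilde{\delta}^1_\alpha x^1_\alpha,\ \tilde{d}_\gamma\leq x^0_\gamma-x^1_\alpha\right),
\]
and setting this to zero yields (\ref{Eq:Newsvendor}); the analogous computation in $x^1_\gamma$ is symmetric and, crucially, shows the cross-partial $\partial^2\mathbb{E}[g]/\partial x^1_\alpha\partial x^1_\gamma$ vanishes, so again separability holds.

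For the second-order condition I would observe that as $x^1_\alpha$ increases both $\{\tilde{d}_\alpha\geq x^0_\alpha+\tilde{\delta}^1_\alpha x^1_\alpha\}$ and $\{\tilde{d}_\gamma\leq x^0_\gamma-x^1_\alpha\}$ shrink, hence the probability is non-increasing in $x^1_\alpha$, so the second derivative is non-negative; this (together with the vanishing cross-partial) gives convexity of $\mathbb{E}[g]$ in $(x^1_\alpha,x^1_\gamma)$. The three cases in the statement then follow from the now-standard dichotomy: if the probability expression in (\ref{Eq:Newsvendor}) exceeds $c^1_\alpha/c^2_\alpha$ throughout $[0,x^0_\gamma]$, the derivative is non-positive and the maximum $x^1_\alpha=x^0_\gamma$ is optimal; if it lies below on the whole interval, the minimum $x^1_\alpha=0$ is optimal; otherwise continuity of the joint density delivers an interior root via the intermediate value theorem. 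The boundary-case probabilities in parts~2 and~3 are simply the values of the left-hand side of (\ref{Eq:Newsvendor}) at $x^1_\alpha=x^0_\gamma$ and $x^1_\alpha=0$, respectively.

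The main obstacle I anticipate is the bookkeeping of boundary contributions from Leibniz's rule: $\Omega_2^a$, $\Omega_2^b$, $\Omega_3$ and $\Omega_1^b$ all share edges that depend on $x^1_\alpha$, and one must verify continuity of $v$ across every such edge so that each incoming boundary term has a matching outgoing one. Once that verification is done carefully—writing out the values of $v$ on each shared edge under the hypothesis $\tilde{\delta}^1_\alpha=\tilde{\delta}^2_\alpha$—the algebra collapses to the single newsvendor identity above, and everything else is mechanical.
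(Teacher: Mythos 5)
Your proposal is correct and follows essentially the same route as the paper: differentiate under the integral with Leibniz's rule over the partition, verify that $v$ is continuous across the $x^1_\alpha$-dependent edges so boundary terms cancel, observe that on $\Omega_2^a\cup\Omega_2^b$ the integrand collapses to $-c^2_\alpha$ once $\tilde{\delta}^1_\alpha=\tilde{\delta}^2_\alpha$ (which is exactly where $h_\alpha$ drops out), and conclude convexity from the monotone shrinkage of the event $\{\tilde{d}_\alpha\geq x^0_\alpha+\tilde{\delta}^1_\alpha x^1_\alpha,\ \tilde{d}_\gamma\leq x^0_\gamma-x^1_\alpha\}$ as in Proposition~\ref{Pr:Case1a}. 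The only difference is cosmetic: the paper first records the general first-order condition (\ref{Eq:First_Result}) for arbitrary $\tilde{\delta}^1_\alpha,\tilde{\delta}^2_\alpha$ and then specializes, whereas you impose $\tilde{\delta}^1_\alpha=\tilde{\delta}^2_\alpha$ from the outset.
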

\begin{proof}
Similar to the proof of Proposition~\ref{Pr:Case1a} we need to derive the first and second order optimality conditions. On regions $\Omega_1,\Omega_3, \Omega_4$ and $\Omega_6$, the structure is the same as in Proposition~\ref{Pr:Case1a} and the problem is separable in cross-training levels $x_\alpha^1$ and $x_\gamma^1$.

The derivative of the second-stage cost function over $\Omega_2^a$ can be calculated as:
\[
\begin{array}{l}
\displaystyle\frac{\partial\mathbb{E}[v(x^1_{\alpha},x^1_{\gamma}, \tilde{\xi});\Omega_2^a|\tilde{\delta}^1, \tilde{\delta}^2]}{\partial x^1_\alpha}=
\displaystyle-\int_0^{x^0_\gamma-x^1_\alpha}\int_{x^0_\alpha+\tilde{\delta}^1_\alpha x^1_{\alpha}}^{x^0_\alpha+\tilde{\delta}^1_\alpha x^1_{\alpha}+\tilde{\delta}^2_\alpha(x^0_\gamma-x^1_{\alpha}-\tilde{d}_{\gamma})} \frac{c^2_\alpha\tilde{\delta}^1_\alpha}{\tilde{\delta}^2_\alpha}f(\tilde{d}_{\alpha} ,\tilde{d}_{\gamma})d\tilde{d}_{\alpha} d\tilde{d}_{\gamma}\\
\quad\quad\quad\quad\quad\quad\quad\displaystyle+\int_0^{x^0_\gamma-x^1_\alpha}(\tilde{\delta}^1_\alpha-\tilde{\delta}^2_\alpha)c^2_\alpha(x^0_\gamma-x^1_{\alpha}-\tilde{d}_{\gamma})f(x^0_\alpha+\tilde{\delta}^1_\alpha x^1_{\alpha}+\tilde{\delta}^2_\alpha(x^0_\gamma-x^1_{\alpha}-\tilde{d}_{\gamma}),\tilde{d}_\gamma)d\tilde{d_\gamma}.
\end{array}
\]
The derivative over $\Omega_2^b$ is
\[
\begin{array}{l}
\displaystyle\frac{\partial\mathbb{E}[v(x^1_{\alpha},x^1_{\gamma}, \tilde{\xi});\Omega_2^b|\tilde{\delta}^1, \tilde{\delta}^2]}{\partial x^1_\alpha}=
\displaystyle-\int_0^{x^0_\gamma-x^1_{\alpha}}\int_{x^0_\alpha+\tilde{\delta}^1_\alpha x^1_{\alpha}+\tilde{\delta}^2_\alpha(x^0_{\gamma}-x^1_{\alpha}-\tilde{d}_{\gamma})}^\infty (c^2_\alpha+h_\alpha(\tilde{\delta}^1_\alpha-\tilde{\delta}^2_\alpha)) f(\tilde{d}_{\alpha} ,\tilde{d}_{\gamma})d\tilde{d}_{\alpha} d\tilde{d}_{\gamma}\\
\quad\quad\quad\quad\quad\quad\quad\quad\displaystyle-\int_0^{x^0_\gamma-x^1_\alpha}(\tilde{\delta}^1_\alpha-\tilde{\delta}^2_\alpha)c^2_\alpha(x^0_\gamma-x^1_{\alpha}-\tilde{d}_{\gamma})f(x^0_\alpha+\tilde{\delta}^1_\alpha x^1_{\alpha}+\tilde{\delta}^2_\alpha(x^0_\gamma-x^1_{\alpha}-\tilde{d}_{\gamma}),\tilde{d}_\gamma)d\tilde{d}_\gamma\\
\quad\quad\quad\quad\quad\quad\quad\quad\displaystyle-\int_{x^0_\alpha+\tilde{\delta}^1_\alpha x^1_{\alpha}}^\infty h_\alpha(\tilde{d}_{\alpha}-(x^0_\alpha+\tilde{\delta}^1_\alpha x^1_\alpha))f(\tilde{d}_{\alpha} ,x^0_\gamma-x^1_\alpha)d\tilde{d}_{\alpha}.
\end{array}
\]
Aggregating all the results and cancelling the appropriate terms we get
\begin{equation}
\begin{array}{ll}
\displaystyle \frac{\partial \mathbb{E}[g(x^1_{\alpha},x^1_{\gamma}, \tilde{\xi})|\tilde{\delta}^1, \tilde{\delta}^2]}{\partial x^1_\alpha}&\displaystyle=c^1_\gamma\displaystyle-\int_0^{x^0_\gamma-x^1_\gamma}\int_{x^0_\alpha+\tilde{\delta}^1_\gamma x^1_{\gamma}}^{x^0_\alpha+\tilde{\delta}^1_\gamma x^1_{\gamma}+\tilde{\delta}^2_\gamma(x^0_\gamma-x^1_{\gamma}-\tilde{d}_{\gamma})} \frac{c^2_\alpha\tilde{\delta}^1_\alpha}{\tilde{\delta}^2_\alpha}f(\tilde{d}_{\alpha} ,\tilde{d}_{\gamma})d\tilde{d}_{\alpha} d\tilde{d}_{\gamma}\\
&\displaystyle\quad-\int_0^{x^0_\gamma-x^1_{\gamma}}\int_{x^0_\alpha+\tilde{\delta}^1_\gamma x^1_{\gamma}+\tilde{\delta}^2_\gamma(x^0_{\gamma}-x^1_{\gamma}-\tilde{d}_{\gamma})}^\infty (c^2_\alpha+h_\alpha(\tilde{\delta}^1_\gamma-\tilde{\delta}^2_\gamma)) f(\tilde{d}_{\alpha} ,\tilde{d}_{\gamma})d\tilde{d}_{\alpha} d\tilde{d}_{\gamma}.
\end{array}
\label{Eq:First_Result}
\end{equation}
If the training effectiveness is the same before and after the demand is observed, i.e., $\tilde{\delta}^1_\alpha=\tilde{\delta}^2_\alpha$, then the first order condition  reduces to (\ref{Eq:Newsvendor}) and the convexity of the cost function is proven similar to Proposition~\ref{Pr:Case1a}.  Since expectation preserves convexity, expected cost  is convex and the solution which satisfies (\ref{Eq:Newsvendor}) solves the cross-training problem for two tasks.
  \end{proof}

Under scenarios corresponding to Case 1.b the opportunity cost is incurred after all the workforce is assured to be working on a task, i.e., only for the demand which can not be satisfied from the available effective capacity. By analyzing the right hand side of (\ref{Eq:First_Result}), we see that if $\tilde{\delta}^1_\alpha$ and $\tilde{\delta}^2_\alpha$ are fixed, the marginal opportunity cost incurred by not cross-training one unit of capacity in the first stage is given by
\[
h_{\alpha}(\tilde{\delta}^1_\alpha-\tilde{\delta}^2_\alpha)\mathbb{P}(x^0_\alpha+\tilde{\delta}^1_\alpha x^1_{\alpha}+\tilde{\delta}^2_\alpha(x^0_{\gamma}-x^1_{\alpha}-\tilde{d}_{\gamma})<\tilde{d}_{\alpha}, \tilde{d}_{\gamma}\leq x^0_\gamma-x^1_{\alpha}).
\]
The opportunity cost plays a role just because of the scenarios in $\Omega^b_2$, i.e., for the scenarios where the second stage cross-training is needed. To understand this further, suppose we defer training one unit of workforce to the second stage. This means that this workforce works with effective capacity of $\tilde{\delta}^2_\alpha$, instead of $\tilde{\delta}^1_\alpha$, i.e. we lose $\tilde{\delta}^1_\alpha-\tilde{\delta}^2_\alpha$ units of capacity by training this workforce in the second stage rather than the first stage. Hence, only the opportunity cost for this lost capacity affects our decisions. If the training effectiveness in both stages is the same, then essentially we do not lose any available effective capacity and hence opportunity cost does not play any role in our decisions.

Another question is what happens when $\tilde{\delta}^1_\alpha \neq \tilde{\delta}^2_\alpha$. We see that equation~(\ref{Eq:First_Result}) would reduce the cross-training problem to solving a simple univariate nonlinear equation if we were to show that the function is convex over the feasible region $[0,x_{\gamma}^0]$. Unfortunately, it can be shown that the objective function is not convex with respect to the decision variables in general and this non-convexity depends on the probability distribution of the demand vector. However, we can state the following relationship between the total cost  second stage productivity factors, even when non-convexity is present.

\begin{proposition}
The total cost is a convex function of $\delta^2_{\alpha}$ over the interval $[c_{\alpha}^2/h_{\alpha},1]$.
\label{Pr:Convdelta}
\end{proposition}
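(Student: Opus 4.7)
The plan is to establish convexity scenario by scenario and then appeal to the fact that expectation preserves convexity. First I would note that $\delta^2_\alpha$ only influences $g$ through second-stage cross-training of workers for task $\alpha$. Inspecting the partitioning, this occurs exclusively on $\Omega_2 = \Omega_2^a \cup \Omega_2^b$: on $\Omega_1, \Omega_3, \Omega_4$ and $\Omega_6$ the second-stage cost does not involve $\delta^2_\alpha$, while on the $\gamma$-analogs $\Omega_5^a, \Omega_5^b$ the relevant productivity is $\delta^2_\gamma$. The first-stage term $c^1_\alpha x^1_\alpha + c^1_\gamma x^1_\gamma$ is trivially independent of $\delta^2_\alpha$, so convexity of the total cost reduces to pointwise convexity of $v$ on $\Omega_2$.

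Second, on $\Omega_2$ I would abbreviate $A = \tilde{d}_\alpha - x^0_\alpha - \tilde{\delta}^1_\alpha x^1_\alpha > 0$ (the excess demand for $\alpha$) and $B = x^0_\gamma - x^1_\alpha - \tilde{d}_\gamma > 0$ (the idle $\gamma$-workforce available for recourse). The split between $\Omega_2^a$ and $\Omega_2^b$ is precisely the comparison $\delta^2_\alpha B \geq A$ versus $\delta^2_\alpha B < A$, so the two branches meet exactly at $\delta^2_\alpha = A/B$. Plugging into the Case~1.b expressions gives $v = c^2_\alpha A/\delta^2_\alpha$ on $\Omega_2^a$, convex in $\delta^2_\alpha$ as a positive constant divided by $\delta^2_\alpha$, and $v = c^2_\alpha B + h_\alpha A - h_\alpha B\, \delta^2_\alpha$ on $\Omega_2^b$, which is affine in $\delta^2_\alpha$.

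Third, I would verify that the piecewise function sticks together convexly on $[c^2_\alpha/h_\alpha, 1]$. Continuity at the kink $\delta^2_\alpha = A/B$ is immediate: both branches take the value $c^2_\alpha B$. The one-sided derivatives are $-h_\alpha B$ on the $\Omega_2^b$ side and $-c^2_\alpha B^2/A$ on the $\Omega_2^a$ side. Convexity at the kink is then equivalent to $-c^2_\alpha B^2/A \geq -h_\alpha B$, i.e.\ $A/B \geq c^2_\alpha/h_\alpha$. But the kink itself sits at $\delta^2_\alpha = A/B$, so any kink lying in $[c^2_\alpha/h_\alpha, 1]$ automatically satisfies this inequality. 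If $A/B < c^2_\alpha/h_\alpha$ the whole interval lies on the $\Omega_2^a$ branch, which is smoothly convex; if $A/B > 1$ the whole interval lies on the $\Omega_2^b$ branch, which is affine. In all cases, $v$ is convex in $\delta^2_\alpha$ on $[c^2_\alpha/h_\alpha, 1]$ for every realization of the other randomness, and since expectation preserves convexity, so is $\mathbb{E}[g]$.

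The main obstacle is step three: recognizing that the range $[c^2_\alpha/h_\alpha, 1]$ is exactly the one that forces any kink to point convexly. Outside this range one crosses into Case~1.a, where second-stage cross-training becomes unprofitable, the optimal recourse switches, and the pieces would meet concavely --- which is precisely why the proposition is stated only over $[c^2_\alpha/h_\alpha, 1]$.
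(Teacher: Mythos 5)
Your proof is correct, but it reaches the conclusion by a different route than the paper. The paper's argument is a one-liner: for a fixed scenario it writes the recourse cost as
\[
v=\max\left\{0,\; c^2_\alpha\frac{A}{\delta^2_\alpha},\; c^2_\alpha B+h_\alpha\bigl(A-\delta^2_\alpha B\bigr)\right\}+C,
\]
with your $A$ and $B$, and invokes the facts that a pointwise maximum of convex functions is convex and that expectation preserves convexity; no derivative computation or kink analysis appears. Your version instead verifies convexity of the piecewise function directly: you identify the breakpoint $\delta^2_\alpha=A/B$ between $\Omega_2^a$ and $\Omega_2^b$, check continuity there, and show the one-sided slopes satisfy $-h_\alpha B\leq -c^2_\alpha B^2/A$ exactly when $A/B\geq c^2_\alpha/h_\alpha$. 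What your approach buys is an explicit explanation of where the hypothesis $\delta^2_\alpha\in[c^2_\alpha/h_\alpha,1]$ is actually used --- the paper's max-representation silently relies on the same restriction (it is only on that interval that second-stage cross-training weakly dominates losing demand, so that the true recourse cost is the \emph{upper} rather than lower envelope of the two branches), and your closing remark that outside the interval the pieces meet concavely matches the paper's own discussion following the proposition. What the paper's approach buys is brevity and robustness: it sidesteps the case analysis on whether $A/B$ falls inside, below, or above the interval, and it does not require differentiability at the kink. Both arguments correctly reduce to pointwise convexity on the only region where $\delta^2_\alpha$ matters, so your proof stands as a valid, somewhat more transparent alternative.
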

\begin{proof}
The cost related to the demand for task $\gamma$ is a constant with respect to $\delta^2_\alpha$ and for ease of notation we denote it as $C$. Then, given a demand scenario $d$ and first-stage decisions $x^1$, the second stage cost can be written as a function of $\tilde{\delta}_\alpha^2$ as follows:
\[
\begin{array}{rl}
v(x^1, d, \delta^1, \delta^2)&=\max\left\{0, c^2_\alpha\displaystyle\frac{d_{\alpha}-x^0_\alpha-\delta^1_\alpha x^1_{\alpha}}{\delta^2_\alpha},\right.\\
&\quad\quad\quad\quad\left.c^2_\alpha(x^0_\gamma-x^1_\alpha-d_\gamma)+h_\alpha(d_\alpha-x_{\alpha}-\delta^1_\alpha x^1_{\alpha}-\delta^2_\alpha(x^0_{\gamma}-x^1_{\alpha}-d_{\gamma}))\right\}+C
\end{array}
\]
The second stage cost function is the maximum of convex functions of $\delta^2_\alpha$ and hence it is also convex. As expectation preserves convexity, the result follows.  
  \end{proof}

The convexity properties of the cost with respect to random variables can be used to infer how the variances of these variables affect the total cost. Proposition~\ref{Pr:Convdelta} essentially indicates that as long as the mean of $\tilde{\delta}^2_\alpha$ is kept constant and $\mathbb{P}(c_{\alpha}^2\leq \tilde{\delta}_{\alpha}^2h_{\alpha})=1$, the total cost increases as the variance of $\tilde{\delta}^2_\alpha$ increases. The condition $\mathbb{P}(c_{\alpha}^2\leq \tilde{\delta}_{\alpha}^2h_{\alpha})=1$ is of crucial importance for this interpretation. If there exists a $\delta^*$ in the support of $\tilde{\delta}^2_\alpha$ such that $c_{\alpha}^2>\delta^*h_{\alpha}$, the decision maker prefers incurring opportunity cost when $\tilde{\delta}^2_{\alpha}=\delta^*$. Hence, in a neighborhood of $\delta^*$ the total cost is constant with respect to $\delta^2_{\alpha}$. If $0<\mathbb{P}(c_{\alpha}^2\leq \tilde{\delta}_{\alpha}^2h_{\alpha})<1$, the decision maker decided whether to lose the demand or cross-training in the second stage, i.e., the second-stage cost function is the minimum of two convex function which is not convex in general. This nonconvexity can sometimes cause the cost to decrease as the variance of $\tilde{\delta}^2_\alpha$ increases. Examples for such cases are presented in Section~\ref{Sec:NumRes}. 

In both Cases 1.a and 1.b, the decisions for tasks can be made separately. Hence, we can use the results above, when it is profitable to cross-train in the second stage for one task and hire without cross-training for the other task, once we make sure that  $h_\alpha\geq \tilde{\delta}^1_\gamma h_{\gamma}$.
\vspace{-0.2in} 

\subsection{Case 2: Workers Used in Their New Tasks $(h_\alpha\leq \tilde{\delta}^1_\gamma h_{\gamma})$}
\vspace{-0.1in}

In this section, we analyze the cases where it is profitable to allocate cross-trained workers to task $\gamma$ even when there is need for task $\alpha$. Due to a similar reasoning as  above, the problem is again separable into tasks. The allocation policy for the workers who are cross-trained to work on task $\alpha$ does not differ from Case 1 under this new assumption. Hence, the optimal cross-training level for task $\alpha$, $x^1_\alpha$ can be decided by using the results above. However, the situation for task $\gamma$ gets more complicated and we have different conditions for $x^{1*}_{\gamma}$. As in Case 1, we base our analysis on whether second stage cross-training is profitable or not. 
\vspace{-0.1in}

\subsubsection{Case 2.a: $c^2_\alpha>\tilde{\delta}^2_\alpha h_\alpha$ and $c^2_\gamma>\tilde{\delta}_\gamma^2h_\gamma$.}
\vspace{-0.1in}

In this case, one should avoid cross-training in the second stage. The conditions are similar to those of Case 1.a, and the second stage cost function $v(x_\alpha^1,x_\gamma^1, \tilde{\xi})$ takes the same form as in Case 1.a over $\Omega_1, \Omega_2, \Omega_3$ or $\Omega_5$. We need to partition regions $\Omega_4$ and $\Omega_6$ further in order to express the second stage cost function in closed form.  

\begin{enumerate}
 \item $\Omega_4^a\cup \Omega_6^b=\{(\tilde{d}_{\alpha},\tilde{d}_\gamma), x^0_\alpha-x^1_{\gamma}<\tilde{d}_{\alpha}\leq x^0_\alpha, x^0_\gamma+\tilde{\delta}^1_\gamma (x^0_{\alpha}-\tilde{d}_{\alpha})<\tilde{d}_{\gamma}<x_\gamma^0+\tilde{\delta}_\gamma^1 x_\gamma^1\}$. Some of the workers cross-trained to work on $\gamma$ are needed for both tasks over this region. As it is more profitable to employ them on task $\gamma$,  task $\alpha$ demand is lost for the shifted workforce. Hence, the second stage cost is given by
\[
v(x_\alpha^1,x_\gamma^1, \tilde{\xi})=h_\alpha \left(\tilde{d}_\alpha-x_\alpha^0+\frac{\tilde{d}_\gamma-x_\gamma^0}{\tilde{\delta}^1_\gamma}\right).
\]

\item $\Omega_4^b\cup \Omega_6^b=\{(\tilde{d}_{\alpha},\tilde{d}_\gamma), x^0_\alpha-x^1_{\gamma}<\tilde{d}_{\alpha}, x_\gamma^0+\tilde{\delta}_\gamma^1 x_\gamma^1<\tilde{d}_{\gamma}\}$. For scenarios in this region, all the workforce cross-trained to work on $\gamma$ are needed for both tasks and will be employed on task $\gamma$. However, even after the workforce is shifted there is excess demand for task $\gamma$. Hence, opportunity cost is incurred for both $\alpha$ and $\gamma$.  The second stage cost function is the sum of these costs and can be calculated as
\[
v(x_\alpha^1,x_\gamma^1, \tilde{\xi})=h_\alpha(\tilde{d}_\alpha-x_\alpha^0+x_\gamma^1)+h_\gamma(\tilde{d}_\gamma-x_\gamma^0-\tilde{\delta}^1_\gamma x_\gamma^1).
\]
\end{enumerate}

\begin{proposition}
If $\mathbb{P}(h_\alpha<\tilde{\delta}^1_\gamma h_\gamma,c^2_\alpha>\tilde{\delta}^2_\alpha, c^2_\gamma>\tilde{\delta}_\gamma^2h_\gamma)=1$, then 
\begin{enumerate}
 \item any $x_\gamma^{1*}\in [0, x_\alpha^0]$ satisfying the equation
\begin{equation}
\mathbb{E}[\tilde{\delta}^1_\gamma h_\gamma;x_\gamma^0+\tilde{\delta}^1_\gamma x_\gamma^1<\tilde{d}_\gamma] -h_\alpha\mathbb{P}(x_\alpha^0-x_\gamma^1<\tilde{d}_\alpha,x_\gamma^0+\tilde{\delta}^1_\gamma x_\gamma^1<\tilde{d}_\gamma)=c_\gamma^1
\label{Eq:Case2a}
\end{equation}
is an optimal cross-training level to work on task $\gamma$.

\item if $\mathbb{E}[\tilde{\delta}^1_\gamma h_\gamma;x_\gamma^0+\tilde{\delta}^1_\gamma x_\gamma^1<\tilde{d}_\gamma] -h_\alpha\mathbb{P}(x_\alpha^0-x_\gamma^1<\tilde{d}_\alpha,x_\gamma^0+\tilde{\delta}^1_\gamma x_\gamma^1<\tilde{d}_\gamma)\geq c_\gamma^1,$ when $x_\gamma^1\in [0, x_\alpha^0]$, then $x_\gamma^{1*}=x_\alpha^0$ is the optimal cross training level to work on task $\gamma$. 

\item if $\mathbb{E}[\tilde{\delta}^1_\gamma h_\gamma;x_\gamma^0+\tilde{\delta}^1_\gamma x_\gamma^1<\tilde{d}_\gamma] -h_\alpha\mathbb{P}(x_\alpha^0-x_\gamma^1<\tilde{d}_\alpha,x_\gamma^0+\tilde{\delta}^1_\gamma x_\gamma^1<\tilde{d}_\gamma)\leq c_\gamma^1,$ when $x_\gamma^1\in [0, x_\alpha^0]$, then $x_\gamma^{1*}=0$ is the optimal cross training level to work on task $\gamma$. 
 
\end{enumerate}

Cross-training level to work on task $\alpha$, $x^{1*}_{\alpha}$ is decided using  Proposition~\ref{Pr:Case1a}.
\end{proposition}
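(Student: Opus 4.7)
The plan is to mimic the first-order-conditions approach used in Proposition~\ref{Pr:Case1a}. Under the blanket assumption $h_\alpha\leq\tilde{\delta}^1_\gamma h_\gamma$ and the no-second-stage-cross-training assumption $c^2_\alpha>\tilde{\delta}^2_\alpha h_\alpha$, $c^2_\gamma>\tilde{\delta}^2_\gamma h_\gamma$, the decision on $x^1_\gamma$ separates from the decision on $x^1_\alpha$ (as the paper noted), so I will differentiate the expected cost only with respect to $x^1_\gamma$, locate interior stationary points, and then verify convexity in $x^1_\gamma$ over $[0,x^0_\alpha]$.

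First I will catalogue which regions contribute. The first-stage term gives $c^1_\gamma$. Over $\Omega_1$ the cost is zero; over $\Omega_2,\Omega_3$ the integrand and its region are constant in $x^1_\gamma$. Thus only $\Omega_5$ (inherited from Case~1.a with $v=h_\gamma(\tilde d_\gamma-x^0_\gamma-\tilde\delta^1_\gamma x^1_\gamma)$), $\Omega_4^a\cup\Omega_6^a$ (boundaries depend on $x^1_\gamma$ but the integrand does not), and $\Omega_4^b\cup\Omega_6^b$ (both boundaries and integrand depend on $x^1_\gamma$) contribute.

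Next I will apply Leibniz' rule region by region, separating integrand derivatives from boundary motion terms. The interface between $\Omega_5$ and $\Omega_4^b\cup\Omega_6^b$ sits on $\tilde d_\alpha=x^0_\alpha-x^1_\gamma$; at that line the two expressions for $v$ agree (both collapse to $h_\gamma(\tilde d_\gamma-x^0_\gamma-\tilde\delta^1_\gamma x^1_\gamma)$), so the two boundary contributions cancel. The interface between $\Omega_4^a\cup\Omega_6^a$ and $\Omega_4^b\cup\Omega_6^b$ along $\tilde d_\gamma=x^0_\gamma+\tilde\delta^1_\gamma x^1_\gamma$ cancels in the same way, and the boundary common to $\Omega_4^a\cup\Omega_6^a$ and $\Omega_1$ contributes $0$ since $v=0$ on $\Omega_1$. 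After the dust settles, only the integrand-differentiation terms remain: $-h_\gamma\tilde\delta^1_\gamma$ over $\Omega_5\cup(\Omega_4^b\cup\Omega_6^b)=\{\tilde d_\gamma>x^0_\gamma+\tilde\delta^1_\gamma x^1_\gamma\}$, and $+h_\alpha$ over $\Omega_4^b\cup\Omega_6^b=\{x^0_\alpha-x^1_\gamma<\tilde d_\alpha,\ x^0_\gamma+\tilde\delta^1_\gamma x^1_\gamma<\tilde d_\gamma\}$. Taking expectation over $(\tilde\delta^1,\tilde\delta^2)$ and setting the derivative to zero reproduces equation~\eqref{Eq:Case2a}.

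The main obstacle is convexity. Unlike Case~1.a the derivative has two pieces with opposite signs: the first is non-increasing in $x^1_\gamma$ (the scenario set $\{\tilde d_\gamma>x^0_\gamma+\tilde\delta^1_\gamma x^1_\gamma\}$ shrinks), but $\mathbb{P}(x^0_\alpha-x^1_\gamma<\tilde d_\alpha,\ x^0_\gamma+\tilde\delta^1_\gamma x^1_\gamma<\tilde d_\gamma)$ need not be monotone, so the direct Hessian argument used in Proposition~\ref{Pr:Case1a} fails. The cleanest way I see is to argue scenario-by-scenario, in the spirit of Proposition~\ref{Pr:Convdelta}: fix $\tilde\xi$ and write the total first- plus second-stage cost as the pointwise maximum of finitely many affine functions of $x^1_\gamma$ corresponding to the Case~2 allocation options (hire/don't hire, push cross-trained workforce to $\gamma$ or keep it on $\alpha$, lose which demand). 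Pointwise maxima of affine functions are convex, and taking expectation preserves convexity, which validates the FOC characterisation.

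Finally, the three cases of the proposition are the usual newsvendor dichotomy: continuity of the left-hand side of~\eqref{Eq:Case2a} together with the intermediate value theorem handles the interior case~1, while cases~2 and~3 correspond to the derivative having constant sign on $[0,x^0_\alpha]$, forcing the optimum to the respective endpoint. The claim that $x^{1*}_\alpha$ is still determined by Proposition~\ref{Pr:Case1a} then follows immediately from the separability already established, since the cost pieces involving $x^1_\alpha$ and their regions are unchanged from Case~1.a under the assumption $c^2_\alpha>\tilde\delta^2_\alpha h_\alpha$.
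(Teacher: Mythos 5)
Your derivation of the first-order condition follows the paper's route exactly: the same region-by-region Leibniz computation, the same observation that $\Omega_1,\Omega_2,\Omega_3$ contribute nothing, and the same boundary cancellations along $\tilde{d}_\alpha=x^0_\alpha-x^1_\gamma$ and $\tilde{d}_\gamma=x^0_\gamma+\tilde{\delta}^1_\gamma x^1_\gamma$, arriving at precisely equation (\ref{Eq:Case2a}); the endpoint cases and the separability claim for $x^{1*}_\alpha$ are also handled as in the paper. Where you genuinely diverge is the convexity step. The paper does push the ``direct Hessian'' argument through: it does not need either term of the derivative to be monotone separately, but bounds the change of the $h_\alpha\mathbb{P}(\cdot)$ term by the probability of the strip $x^0_\gamma+\tilde{\delta}^1_\gamma s<\tilde{d}_\gamma\leq x^0_\gamma+\tilde{\delta}^1_\gamma(s+\Delta)$ and combines it with the change of the $\mathbb{E}[\tilde{\delta}^1_\gamma h_\gamma;\cdot]$ term over the same strip, so that the difference quotient is at least $\mathbb{E}[(\tilde{\delta}^1_\gamma h_\gamma-h_\alpha)\,;\,\text{strip}]/\Delta\geq 0$ by the Case~2 hypothesis. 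Your claim that this approach ``fails'' is therefore too strong, but your substitute is valid and arguably cleaner: for each fixed scenario the second-stage cost is the value of the optimal allocation, hence the upper envelope of the affine-in-$x^1_\gamma$ expressions attached to $\Omega_1$, $\Omega_5$, $\Omega_4^a\cup\Omega_6^a$ and $\Omega_4^b\cup\Omega_6^b$, and expectation preserves convexity (the same device the paper uses in Proposition~\ref{Pr:Convdelta}, applied here to $x^1_\gamma$ instead of $\delta^2_\alpha$). The one line you should add is the verification that the piecewise cost really is the pointwise maximum of those affine pieces on all of $[0,x^0_\alpha]$, not just on the regions where each piece is active; that check again consumes the inequality $h_\alpha\leq\tilde{\delta}^1_\gamma h_\gamma$, e.g.\ to show the $\Omega_4^b\cup\Omega_6^b$ piece lies below the $\Omega_5$ piece when $\tilde{d}_\alpha\leq x^0_\alpha-x^1_\gamma$. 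The trade-off: the paper's computation is self-contained but tied to the specific two-term form of the derivative, while your envelope argument is more structural, extends immediately to perturbations of the region geometry, and makes transparent exactly where the hypothesis $h_\alpha<\tilde{\delta}^1_\gamma h_\gamma$ enters.
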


\begin{proof}
We know from the proof of Proposition~\ref{Pr:Case1a} that 
\[
\displaystyle\frac{\partial E(v(x_\alpha^1,x_\gamma,\tilde{\xi});\Omega_1|\tilde{\delta}^1, \tilde{\delta}^2)}{\partial x_\gamma^1}=\displaystyle\frac{\partial E(v(x_\alpha^1,x_\gamma,\tilde{\xi});\Omega_2|\tilde{\delta}^1, \tilde{\delta}^2)}{\partial x_\gamma^1}=\displaystyle\frac{\partial E(v(x_\alpha^1,x_\gamma,\tilde{\xi});\Omega_3|\tilde{\delta}^1, \tilde{\delta}^2)}{\partial x_\gamma^1}=0
\]
and
\[\begin{array}{rl}
\displaystyle\frac{\partial E(v(x_\alpha^1,x_\gamma,\tilde{\xi});\Omega_5|\tilde{\delta}^1, \tilde{\delta}^2)}{\partial x_\gamma^1}&=\displaystyle-\tilde{\delta}^1_\gamma h_\gamma \int_0^{x_\alpha^0-x_\gamma^1}\int_{x_\gamma^0+\tilde{\delta}_\gamma^1x_\gamma^1}^\infty f(d_\alpha,d_\gamma)dd_\gamma dd_\alpha\\&\displaystyle\quad-h_\gamma\int_{x_\gamma^0+\tilde{\delta}_\gamma^1x_\gamma^1}^\infty
(d_\gamma-x_\gamma^0-\tilde{\delta}_\gamma^1x_\gamma^1) f(x_\alpha^0-x_\gamma^1,d_\gamma)dd_\gamma.
\end{array}\]

Using Leibniz rule successively as in previous proofs we get
\[\displaystyle\frac{\partial E(v(x_\alpha^1,x_\gamma,\tilde{\xi});\Omega_4^a\cup \Omega_6^a|\tilde{\delta}^1, \tilde{\delta}^2)}{\partial x_\gamma^1}=\tilde{\delta}_\gamma^1h_\alpha\int_{x_\alpha^0-x_\gamma^1}(d_\alpha-x_\alpha^0-x_\gamma^1)f(d_\alpha,x_\gamma^0+\tilde{\delta}_\gamma^1x_\gamma^1)dd_\alpha\]
and 

\[\begin{array}{rl}
\displaystyle\frac{\partial E(v(x_\alpha^1,x_\gamma,\tilde{\xi});\Omega_4^b\cup \Omega_6^b|\tilde{\delta}^1, \tilde{\delta}^2)}{\partial x_\gamma^1}&=\displaystyle(h_\alpha-\tilde{\delta}^1_\gamma h_\gamma) \int_{x_\alpha^0-x_\gamma^1}^\infty\int_{x_\gamma^0+\tilde{\delta}_\gamma^1x_\gamma^1}^\infty f(d_\alpha,d_\gamma)dd_\gamma dd_\alpha\\&\displaystyle\quad+h_\gamma\int_{x_\gamma^0+\tilde{\delta}_\gamma^1x_\gamma^1}^\infty
(d_\gamma-x_\gamma^0-\tilde{\delta}_\gamma^1x_\gamma^1)f(x_\alpha^0-x_\gamma^1, d_\gamma) dd_\gamma\\&\displaystyle\quad-\tilde{\delta}_\gamma^1h_\alpha\int_{x_\alpha^0-x_\gamma^1}(d_\alpha-x_\alpha^0-x_\gamma^1)f(d_\alpha,x_\gamma^0+\tilde{\delta}_\gamma^1x_\gamma^1)dd_\alpha.
\end{array}\]
Summing the components above, we obtain 
\begin{equation}
\begin{array}{rl}
\displaystyle\frac{\partial \mathbb{E}[g(x^1_\alpha,x^1_\gamma,\tilde{\xi})]}{\partial x_\gamma^1}&=c^1_\gamma-\mathbb{E}[\tilde{\delta}^1_\gamma h_\gamma;x_\gamma^0+\tilde{\delta}^1_\gamma x_\gamma^1<\tilde{d}_\gamma] +h_\alpha\mathbb{P}(x_\alpha^0-x_\gamma^1<\tilde{d}_\alpha,x_\gamma^0+\tilde{\delta}^1_\gamma x_\gamma^1<\tilde{d}_\gamma)
\label{Eq:Case2afir}
\end{array}
\end{equation}
and equation~(\ref{Eq:Case2a}) follows as the first order condition for optimality. Now we prove the convexity of the second stage cost function. The cross-partial derivatives are zero and from Proposition~\ref{Pr:Case1a}, the second partial with respect to $x_\alpha^1$ is positive, hence we only need to check the second partial with respect to $x_\gamma^1$. Without loss of generality, assume $\Delta>0$, then
\begin{equation}
\displaystyle\frac{\partial^2 \mathbb{E}[g(x^1_\alpha,x^1_\gamma,\tilde{\xi})]}{(\partial x_\gamma^1)^2}=\displaystyle\lim_{\Delta\rightarrow 0}\frac{\displaystyle\left.\frac{\partial \mathbb{E}[g(x^1_\alpha,x^1_\gamma,\tilde{\xi})]}{\partial x_\gamma^1}\right|_{x_\gamma^1=s+\Delta}-\left.\frac{\partial \mathbb{E}[g(x^1_\alpha,x^1_\gamma,\tilde{\xi})]}{\partial x_\gamma^1}\right|_{x_\gamma^1=s}}{\Delta} 
\label{Eq:Case2asec}\end{equation}
Then we get,
\[
\mathbb{E}[\tilde{\delta}^1_\gamma h_\gamma;x_\gamma^0+\tilde{\delta}^1_\gamma s+\Delta<\tilde{d}_\gamma]-\mathbb{E}[\tilde{\delta}^1_\gamma h_\gamma;x_\gamma^0+\tilde{\delta}^1_\gamma s<\tilde{d}_\gamma]=\mathbb{E}_{\tilde{\delta}}[-\int_{x_\gamma^0+\tilde{\delta}^1_\gamma s}^{x_\gamma^0+\tilde{\delta}^1_\gamma s+\Delta}\int_0^\infty \tilde{\delta}^1_\gamma h_\gamma f(d_\alpha, d_\gamma)dd_\alpha dd_\gamma]
\]
and
\[
\begin{array}{l}
h_\alpha(\mathbb{P}(x_\gamma^0-s-\Delta<\tilde{d}_\alpha,x_\gamma^0+\tilde{\delta}^1_\gamma s+\Delta<\tilde{d}_\gamma)- \mathbb{P}(x_\gamma^0-s<\tilde{d}_\alpha,x_\gamma^0+\tilde{\delta}^1_\gamma s<\tilde{d}_\gamma))\\
\quad\geq h_\alpha(\mathbb{P}(x_\gamma^0-s-\Delta<\tilde{d}_\alpha,x_\gamma^0+\tilde{\delta}^1_\gamma s+\Delta<\tilde{d}_\gamma)- \mathbb{P}(x_\gamma^0-s-\Delta<\tilde{d}_\alpha,x_\gamma^0+\tilde{\delta}^1_\gamma s<\tilde{d}_\gamma))\\
\quad=-h_\alpha\mathbb{P}(x_\gamma^0-s-\Delta<\tilde{d}_\alpha,x_\gamma^0+\tilde{\delta}^1_\gamma s<\tilde{d}_\gamma\leq x_\gamma^0+\tilde{\delta}^1_\gamma s+\Delta)\\
\quad\geq\displaystyle \mathbb{E}_{\tilde{\delta}}[-\int_{x_\gamma^0+\tilde{\delta}^1_\gamma s}^{x_\gamma^0+\tilde{\delta}^1_\gamma s+\Delta}\int_0^\infty h_\alpha f(d_\alpha, d_\gamma)dd_\alpha dd_\gamma].
\end{array}
\]
Using equations (\ref{Eq:Case2afir}) and (\ref{Eq:Case2asec}) along with above calculations, we get
\[
\displaystyle\frac{\partial^2 \mathbb{E}[g(x^1_\alpha,x^1_\gamma,\tilde{\xi})]}{(\partial x_\gamma^1)^2}\geq\displaystyle\lim_{\Delta\rightarrow 0}\frac{\mathbb{E}_{\tilde{\delta}}[\int_{x_\gamma^0+\tilde{\delta}^1_\gamma s}^{x_\gamma^0+\tilde{\delta}^1_\gamma s+\Delta}\int_0^\infty (\tilde{\delta}^1_\gamma h_\gamma-h_\alpha) f(d_\alpha, d_\gamma)dd_\alpha dd_\gamma]}{\Delta}\geq 0,
\]
where the last inequality follows from the assumption $\tilde{\delta}^1_\gamma h_\gamma>h_\alpha$. 
  \end{proof}
When $\tilde{\delta}^1$ is known deterministically, the expectations in the proposition can be replaced with probabilities, which leads to a newsvendor-network type equation.
\begin{corollary}
If $\tilde{\delta}^1=\delta^1$ deterministically, and $\mathbb{P}(h_\alpha<\delta^1_\gamma h_\gamma,c^2_\alpha>\tilde{\delta}^2_\alpha, c^2_\gamma>\tilde{\delta}_\gamma^2h_\gamma)=1$, then 
\begin{enumerate}
 \item any $x_\gamma^{1*}\in [0, x_\alpha^0]$ satisfying the equation
\begin{equation}
\delta^1_\gamma h_\gamma\mathbb{P}(x_\gamma^0+\delta^1_\gamma x_\gamma^1<\tilde{d}_\gamma) -h_\alpha\mathbb{P}(x_\alpha^0-x_\gamma^1<\tilde{d}_\alpha,x_\gamma^0+\delta^1_\gamma x_\gamma^1<\tilde{d}_\gamma)=c_\gamma^1
\label{Eq:Case2ac}
\end{equation}
is an optimal cross-training level to work on task $\gamma$.

\item if $\delta^1_\gamma h_\gamma\mathbb{P}(x_\gamma^0+\delta^1_\gamma x_\gamma^1<\tilde{d}_\gamma) -h_\alpha\mathbb{P}(x_\alpha^0-x_\gamma^1<\tilde{d}_\alpha,x_\gamma^0+\delta^1_\gamma x_\gamma^1<\tilde{d}_\gamma)\geq c_\gamma^1,$ when $x_\gamma^1\in [0, x_\alpha^0]$, then $x_\gamma^{1*}=x_\alpha^0$ is the optimal cross training level to work on task $\gamma$. 

\item if $\delta^1_\gamma h_\gamma\mathbb{P}(x_\gamma^0+\delta^1_\gamma x_\gamma^1<\tilde{d}_\gamma) -h_\alpha\mathbb{P}(x_\alpha^0-x_\gamma^1<\tilde{d}_\alpha,x_\gamma^0+\delta^1_\gamma x_\gamma^1<\tilde{d}_\gamma)\leq c_\gamma^1,$ when $x_\gamma^1\in [0, x_\alpha^0]$, then $x_\gamma^{1*}=0$ is the optimal cross training level to work on task $\gamma$. 
 
\end{enumerate}

Cross-training level to work on task $\alpha$, $x^{1*}_{\alpha}$ is decided using  Proposition~\ref{Pr:Case1a}.
\end{corollary}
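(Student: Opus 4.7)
The plan is to derive this corollary as an immediate specialization of the preceding Proposition by substituting the deterministic productivity vector $\delta^1$ in place of the random $\tilde{\delta}^1$. The entire reduction hinges on one observation: when a random variable is almost surely constant, the expectation operator in the conditional-expectation-over-a-set notation $\mathbb{E}[\cdot\,;\,\cdot]$ collapses to the constant times the probability of the conditioning event.

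First, I would verify that the hypotheses of the Proposition remain in force under the stronger hypothesis of the corollary. Since $\tilde{\delta}^1=\delta^1$ with probability one, the event $\{h_\alpha<\tilde{\delta}^1_\gamma h_\gamma\}$ reduces to the deterministic inequality $h_\alpha<\delta^1_\gamma h_\gamma$, which is assumed. The conditions on $\tilde{\delta}^2$ are unaffected. Hence the Proposition applies, and in particular the expected cost function is convex in $x_\gamma^1$ over $[0,x_\alpha^0]$, so the first-order analysis with the same three boundary/interior cases remains valid.

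Second, I would simplify the first-order condition of the Proposition. Because $\tilde{\delta}^1_\gamma$ is almost surely equal to the constant $\delta^1_\gamma$, we may write
\begin{equation*}
\mathbb{E}[\tilde{\delta}^1_\gamma h_\gamma;x_\gamma^0+\tilde{\delta}^1_\gamma x_\gamma^1<\tilde{d}_\gamma]
=\delta^1_\gamma h_\gamma\,\mathbb{P}(x_\gamma^0+\delta^1_\gamma x_\gamma^1<\tilde{d}_\gamma),
\end{equation*}
and similarly replace $\tilde{\delta}^1_\gamma$ by $\delta^1_\gamma$ inside the second probability in the Proposition's equation. Substituting into (\ref{Eq:Case2a}) yields (\ref{Eq:Case2ac}) exactly, so part~(1) of the corollary follows. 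The inequalities governing parts~(2) and~(3) are obtained by the same substitution in the corresponding inequalities of the Proposition, and the boundary optimizers $x_\gamma^{1*}=x_\alpha^0$ and $x_\gamma^{1*}=0$ transfer verbatim.

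There is essentially no obstacle here; the only point that needs a brief remark is that the claim about $x_\alpha^{1*}$ being determined by Proposition~\ref{Pr:Case1a} requires checking that Case~1's structural assumption $h_\alpha\geq\tilde{\delta}^1_\gamma h_\gamma$ is not needed to obtain the $\alpha$-side condition separately. This was already argued at the start of Section~2.2, where separability of the two task decisions under the Case~2 regime was established, so the same allocation rule for task $\alpha$ applies without modification.
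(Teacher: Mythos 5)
Your proposal is correct and is exactly the argument the paper intends: the paper offers no separate proof beyond the remark that, with $\tilde{\delta}^1$ deterministic, the expectations over scenario regions in equation~(\ref{Eq:Case2a}) collapse to constants times probabilities, yielding~(\ref{Eq:Case2ac}), with the convexity and the three interior/boundary cases carried over unchanged from the proposition. Your added check that the $\alpha$-side decision still follows from Proposition~\ref{Pr:Case1a} matches the separability argument already given at the start of Section~2.2.
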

 
Equation~(\ref{Eq:Case2ac}) suggests a trade-off between first stage cross-training and opportunity costs in the second stage. The first term on the left-hand side denotes the savings resulting from training one unit more to work on task $\gamma$. On the other hand, this extra cross-training may cause an opportunity cost for $\alpha$, which is denoted by the second term on the left-hand side.
\vspace{-0.1in}

\subsubsection{Case 2.b: $c^2_\alpha<\tilde{\delta}^2_\alpha h_\alpha$ and $c^2_\gamma<\tilde{\delta}_\gamma^2h_\gamma$.}
\vspace{-0.1in}

Now, we consider the situation where it is profitable to cross-train in the second stage and it is more profitable to use the cross-trained workforce for task $\gamma$ when needed. Hence, the case under consideration is a combination of Cases 1.b and 2.a. On regions $\Omega_1, \Omega_2$ and $\Omega_3$, the cross-trained workforce for $\gamma$ is not needed and the second stage cost function resembles the form in Case 1.b. On regions $\Omega_5^a$ and $\Omega_5^b$, the demand for task $\alpha$ workforce is low, so that there is also no competition between different tasks. Hence, the second stage cost function also takes the form in Case 1.b on these regions. On $\Omega_4^a,\Omega_4^b, \Omega_6^a$ and $\Omega_6^b$, two tasks compete for the cross-trained workforce and second stage cross-training is not possible. So the second stage cost function is the same as in Case 2.a. Now, the first and second order optimality conditions can be compiled from the above results to obtain the following proposition.

\begin{proposition}
If $\mathbb{P}(h_\alpha<\delta^1_\gamma h_\gamma,c^2_\alpha<\tilde{\delta}^2_\alpha, c^2_\gamma<\tilde{\delta}_\gamma^2h_\gamma)=1$, and the training effectivenesses before and after demand is realized are the same, i.e., $\tilde{\delta}^1=\tilde{\delta}^2$ with probability one, then 
\begin{enumerate}
 \item any $x_\gamma^{1*}\in [0, x_\alpha^0]$ satisfying the equation
\begin{equation}
c_\gamma^2 \mathbb{P}(\tilde{d}_\alpha\leq x_\alpha^0-x_\gamma^{1*}, \tilde{d}_\gamma \geq x_\gamma^0+\tilde{\delta}^1_\gamma x_\gamma^{1*})+\mathbb{E}[h_\alpha-\tilde{\delta}_\gamma^1 h_\gamma; x_\alpha^0-x_\gamma^{1*}, x_\gamma^0+\tilde{\delta}_\gamma^1 x_\gamma^{1*}]=c_\gamma^1
\label{Eq:Case2b}
\end{equation}
is an optimal cross-training level to work on task $\gamma$.

\item if $c_\gamma^2 \mathbb{P}(\tilde{d}_\alpha\leq x_\alpha^0-x_\gamma^{1}, \tilde{d}_\gamma \geq x_\gamma^0+\tilde{\delta}^1_\gamma x_\gamma^{1*})+\mathbb{E}[h_\alpha-\tilde{\delta}_\gamma^1 h_\gamma; x_\alpha^0-x_\gamma^1, x_\gamma^0+\tilde{\delta}_\gamma^1 x_\gamma^1]\geq c_\gamma^1,$ when $x_\gamma^1\in [0, x_\alpha^0]$, then $x_\gamma^{1*}=x_\alpha^0$ is the optimal cross training level to work on task $\gamma$. 

\item if $c_\gamma^2 \mathbb{P}(\tilde{d}_\alpha\leq x_\alpha^0-x_\gamma^{1}, \tilde{d}_\gamma \geq x_\gamma^0+\tilde{\delta}^1_\gamma x_\gamma^{1*})+\mathbb{E}[h_\alpha-\tilde{\delta}_\gamma^1 h_\gamma; x_\alpha^0-x_\gamma^1, x_\gamma^0+\tilde{\delta}_\gamma^1 x_\gamma^1]\leq c_\gamma^1,$ when $x_\gamma^1\in [0, x_\alpha^0]$, then $x_\gamma^{1*}=0$ is the optimal cross training level to work on task $\gamma$. 
 
\end{enumerate}

The optimal cross-training level for task $\alpha$, $x_\alpha^{1*}$, is determined using Proposition~\ref{Pr:Case1b}.

\end{proposition}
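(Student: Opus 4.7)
The plan is to combine the derivations of Propositions 2 and 3 (Cases 1.b and 2.a), since by the description in Case 2.b each partition of the support contributes a term that has already essentially appeared in one of those two previous arguments. Separability in the tasks means that the condition for $x_\alpha^{1*}$ is inherited from Proposition~\ref{Pr:Case1b}, so the entire burden of this proof is the first‑order and second‑order analysis of $\partial/\partial x_\gamma^1$ of the expected cost.

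First I would write out $v(x_\alpha^1, x_\gamma^1, \tilde{\xi})$ region by region, using the partitioning already given. On $\Omega_1$, $\Omega_2^a$, $\Omega_2^b$ and $\Omega_3$, only task $\alpha$ demand is tight, so $v$ depends on $x_\gamma^1$ only through the first‑stage cost $c_\gamma^1 x_\gamma^1$, and these regions contribute nothing to $\partial/\partial x_\gamma^1$ beyond $c_\gamma^1$ itself. On $\Omega_5^a$ and $\Omega_5^b$, the cost function has the same structure as in Case~1.b with $\alpha$ and $\gamma$ interchanged; using $\tilde{\delta}^1_\gamma=\tilde{\delta}^2_\gamma$ a.s., the analogue of the simplification in the proof of Proposition~\ref{Pr:Case1b} kills the $h_\gamma(\tilde{\delta}_\gamma^1-\tilde{\delta}_\gamma^2)$ term and leaves only the newsvendor‑type contribution
\[
-c_\gamma^2\,\mathbb{P}\!\bigl(\tilde{d}_\alpha\le x_\alpha^0-x_\gamma^1,\ \tilde{d}_\gamma\ge x_\gamma^0+\tilde{\delta}_\gamma^1 x_\gamma^1\bigr).
\]
On $\Omega_4^a\cup\Omega_6^a$ and $\Omega_4^b\cup\Omega_6^b$ the form of $v$ coincides with that used in the proof of the Case~2.a proposition, and differentiation by Leibniz together with the usual boundary cancellations (identical to those carried out in that proposition) yields the contribution
\[
-\mathbb{E}\bigl[\tilde{\delta}_\gamma^1 h_\gamma-h_\alpha;\ \tilde{d}_\alpha\ge x_\alpha^0-x_\gamma^1,\ \tilde{d}_\gamma\ge x_\gamma^0+\tilde{\delta}_\gamma^1 x_\gamma^1\bigr].
\]
Adding these pieces and setting the derivative to zero rearranges into (\ref{Eq:Case2b}).

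For convexity, I would mimic the proof of the Case~2.a proposition. The cross‑partial $\partial^2/\partial x_\alpha^1\partial x_\gamma^1$ is zero because the four regions contributing to $\partial/\partial x_\gamma^1$ have cost terms that do not depend on $x_\alpha^1$, and the second partial with respect to $x_\alpha^1$ is nonnegative by Proposition~\ref{Pr:Case1b}. For $\partial^2/(\partial x_\gamma^1)^2$, I would take a difference quotient as in (\ref{Eq:Case2asec}); the $c_\gamma^2$‑piece is monotone in $x_\gamma^1$ because the event $\{\tilde{d}_\alpha\le x_\alpha^0-x_\gamma^1,\ \tilde{d}_\gamma\ge x_\gamma^0+\tilde{\delta}_\gamma^1 x_\gamma^1\}$ shrinks as $x_\gamma^1$ grows, giving a nonnegative contribution, and the $\mathbb{E}[\tilde{\delta}_\gamma^1 h_\gamma-h_\alpha;\cdot]$‑piece is handled by exactly the bounding trick used in the proof of Case~2.a, where $\tilde{\delta}_\gamma^1 h_\gamma-h_\alpha\ge 0$ a.s.\ is used. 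Each of the three terms is shown to be nonnegative individually and convexity follows. The boundary cases $x_\gamma^{1*}\in\{0,x_\alpha^0\}$ are dealt with by the same argument as in the previous propositions: if the derivative is signed throughout $[0,x_\alpha^0]$ we push to the corresponding endpoint, otherwise the intermediate value theorem locates a zero inside.

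The main obstacle will be the bookkeeping of the Leibniz boundary terms across the four relevant sub‑regions (especially along the shared boundary $\tilde{d}_\gamma = x_\gamma^0+\tilde{\delta}_\gamma^1 x_\gamma^1$ where $\Omega_5$ meets $\Omega_4\cup\Omega_6$, and along $\tilde{d}_\alpha=x_\alpha^0-x_\gamma^1$ where the ``competing tasks'' regions begin); verifying that these boundary contributions pairwise cancel, so that only the clean $\mathbb{P}$ and $\mathbb{E}[\,\cdot\,;\,\cdot\,]$ terms in (\ref{Eq:Case2b}) survive, is the only nontrivial calculation. The assumption $\tilde{\delta}^1=\tilde{\delta}^2$ is essential precisely here: it forces the $h_\alpha(\tilde{\delta}_\alpha^1-\tilde{\delta}_\alpha^2)$ and $h_\gamma(\tilde{\delta}_\gamma^1-\tilde{\delta}_\gamma^2)$ terms that would otherwise appear in $\Omega_2^b$, $\Omega_5^b$ (mirroring Proposition~\ref{Pr:Case1b}) to drop out, which is what renders the final condition a clean newsvendor‑type equation independent of whether one opens a non‑convex regime.
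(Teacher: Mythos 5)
Your proposal is correct and follows the same route as the paper, which gives no separate proof for this proposition and states only that the optimality conditions are ``compiled from the above results'': your region-by-region assembly of the Case~1.b-type contribution on $\Omega_5$ with the Case~2.a-type contribution on $\Omega_4\cup\Omega_6$, together with the cancellation of the Leibniz boundary terms along $\tilde{d}_\alpha=x_\alpha^0-x_\gamma^1$ and $\tilde{d}_\gamma=x_\gamma^0+\tilde{\delta}_\gamma^1x_\gamma^1$, is exactly that compilation. One remark: your derivative places the competing-region term in the optimality equation as $+\mathbb{E}[\tilde{\delta}_\gamma^1h_\gamma-h_\alpha;\cdot]$, which is the sign consistent with the paper's own reading of (\ref{Eq:Case2b}) as a saving of $\tilde{\delta}_\gamma^1h_\gamma-h_\alpha$ per person; the opposite sign printed in the proposition appears to be a typo, so your statement that the sum ``rearranges into (\ref{Eq:Case2b})'' holds only up to that correction.
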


Remember, under the conditions of Case 1.b and Proposition~\ref{Pr:Case1b}, the opportunity cost is a fixed cost and when there is excess demand for both tasks, the workers are always assigned to their original tasks and excess demand is lost. However, under Case 2.b, when there is excess demand for both tasks, one should exploit the opportunity of saving $\tilde{\delta}_\gamma^1h_\gamma-h_\alpha$ per person by assigning cross-trained workforce to work on task $\gamma$. This opportunity is reflected in the second term on the left-hand side of (\ref{Eq:Case2b}). 
\vspace{-0.2in}

\section[]{Numerical Experiments}\label{Sec:NumRes}
\vspace{-0.1in}

In this section, we analyze how varying different parameters affects cross-training policies numerically. We consider the base-case problem in Table~\ref{Tb:DataBC} and then vary the parameters to see how the decisions change. 
The base-case is designed so that task $\alpha$ is the task that mainly needs cross-trained workers and task $\gamma$ only occasionally exceeds the available capacity.

\begin{table}[!h]
\begin{center}
\begin{tabular}{ccc}\hline
Parameter  &Values for $\alpha$ &Values for $\gamma$\\\hline
$x^0$      &5400  &5400\\
$c^1$      &2800 &2800\\
$c^2$      &4000 &4000\\
$\delta^1$ &0.90 &0.90\\
$\delta^2$ &0.90 &0.90\\
$\tilde{d}$ &Uniform(55000, 65000) &Uniform(20000, 60000)\\\hline
\end{tabular}
\end{center}
\vspace{-0.2in}
\caption{Data for Base-Case Scenario of Two-Task Problem}
\label{Tb:DataBC}
\end{table}

In our first experiment, we wish to see how the difference between productivities of first and second stages affects our cross-training costs. To eliminate the effect of cost difference, we set $c_\alpha^2=c^2_\gamma=2800$ and vary the ratio of the productivity factors, i.e., $\delta^2/\delta^1$, for both tasks $\alpha$ and $\gamma$. The results are shown in Figure~\ref{Fg:TCvsDelta}.
\begin{figure}
\begin{center}
\begin{tabular}{cc}
\subfigure[$h$=3500]{\label{Fg:TCvsDelta3500}
\includegraphics[scale=0.4]{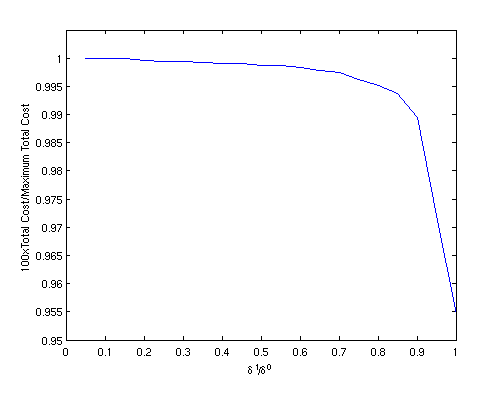}}
& 
\subfigure[$h$=5000]{\label{Fg:TCvsDelta5000}
\includegraphics[scale=0.39]{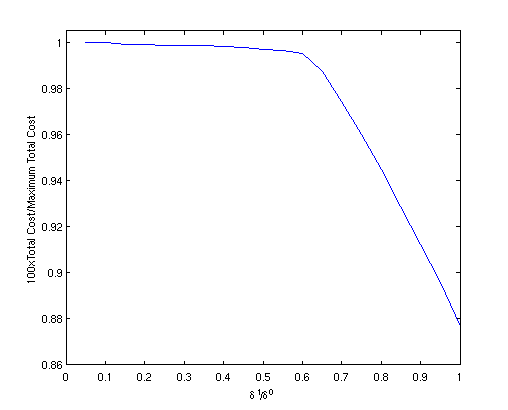}
}
\end{tabular}
\end{center}
\vspace{-0.2in}
\caption{The total cost versus $\delta^2/\delta^1$}
\label{Fg:TCvsDelta}
\end{figure} 
The horizontal axis on Figure~\ref{Fg:TCvsDelta} shows $\delta^2/\delta^1$ and the vertical axis shows the ratio of cost over the cost with no cross-training. We see that for small  $\delta^2$, it is not profitable to cross-train in the second stage. After a critical value of $\delta^2/\delta^1$, the second stage cross-training becomes profitable and we see an almost linear decrease in the cost. The critical value decreases as the opportunity cost increases. 

Now we demonstrate how the variance of demand affects our cross-training policies. Figure~\ref{Fg:FSHCvsDemVarA} shows how the variance of demand for task $\alpha$ affects our decisions when productivity vectors are high, i.e., $\tilde{\delta}^2_\alpha=\tilde{\delta}^2_\gamma=0.9$ with probability one. The demand follows a uniform distribution with the same mean as in Table~\ref{Tb:DataBC} and the horizontal axis denotes the width for the support of demand. The vertical axis shows the cost as a percentage of the optimal cost under deterministic demand. The opportunity costs $h_\alpha$ and $h_\gamma$ are taken to be 8000. The costs exhibit similar behavior when experimented with other values of the opportunity costs.

\begin{figure}
\begin{center}
\begin{tabular}{cc}
\subfigure[The support width is between 0 and 40000]{\label{Fg:TCvsDemvarA8000FSH}
\includegraphics[scale=0.38]{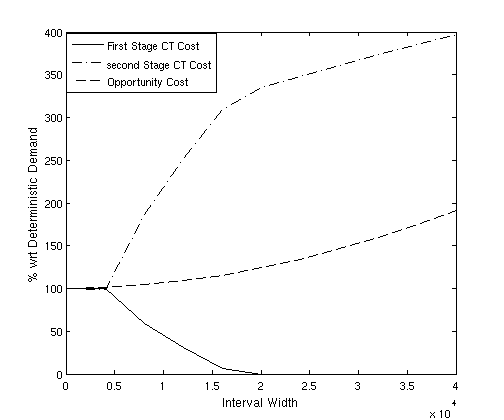}}&
\subfigure[Zoom in when the support width is between 0 and 4000]{\label{Fg:TCvsDemvarA8000FSHZoom}
\includegraphics[scale=0.38]{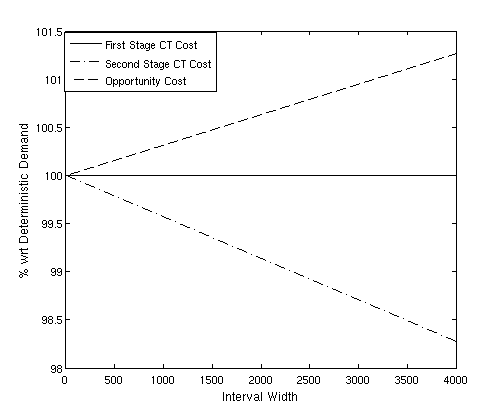}}\\
\end{tabular}
\end{center}
\vspace{-0.2in}
\caption{Individual costs with respect to the variability of demand for $\alpha$ when $\tilde{\delta}^2_{\alpha}=\tilde{\delta}^2_{\gamma}=0.9$}
\label{Fg:FSHCvsDemVarA}
\end{figure}

Figure~\ref{Fg:TCvsDemvarA8000FSH} shows that when the demand for $\alpha$ has a small variance, the individual costs  differ from the deterministic demand only by a small margin. However, after a critical threshold, the first-stage cross-training cost decreases significantly. Finally, if the variance is  too high, then we do not wish to cross-train at all as the condition (b) of Proposition~\ref{Pr:Case1b}. Figure~\ref{Fg:TCvsDemvarA8000FSHZoom} shows how our cross-training policies change before we reach this critical value. For these small variances, we do not change the first stage cross-training as variance increases, but we begin to observe instances where second-stage cross-training is not needed and the number of instances where demand is lost increases as well. Hence, contrary to large variance instances, the second stage cross-training cost decreases and opportunity cost increases with the variance of demand for $\alpha$. Figure~\ref{Fg:TCvsDemVarA} demonstrates that the total cost increases monotonically with the variance of demand for $\alpha$.

\begin{figure}
\begin{center}
\includegraphics[scale=0.38]{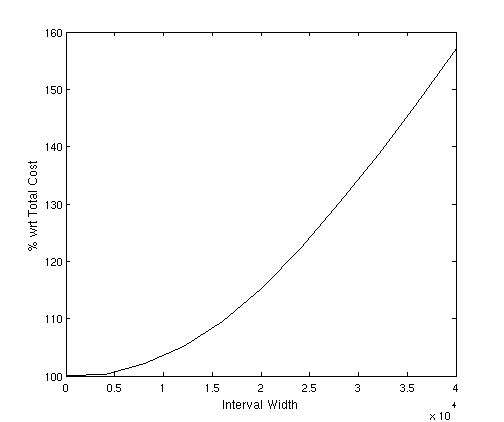}
\end{center}
\vspace{-0.2in}
\caption{Total cost with respect to the variability of demand for $\alpha$ when $\tilde{\delta}^2_{\alpha}=\tilde{\delta}^2_{\gamma}=0.9$}
\label{Fg:TCvsDemVarA}
\end{figure}

Corallary~\ref{Cr:Var} proves that the first stage cross-training  is non-increasing with the demand variance under certain conditions. Hence, we perform experiments to test how restrictive these conditions are. To satisfy the assumption of no second stage cross-training, we take $c^2_\alpha=c^2_\gamma=5000$, $\delta^2_\alpha=0.9$ and $\delta^2_\gamma=0.6$. In Figure~\ref{Fg:FSHCvsDemVar06}, we demonstrate the results of these experiments for $h_i$ ranging from 5000 to 8000. The assumption $c^1_i\geq \tilde{\delta}^1_ih_i$ does not hold for both tasks for these opportunity costs. When $h_\alpha=h_\gamma=5000, 6000$ or 7000, the first stage cross-training cost is non-increasing. However, when $h_\alpha=h_\gamma=8000$, we observe a non-monotonic behavior in the first stage cross-training cost. This behavior can be explained as follows: We see that an increase in the variance forces the decision maker to decrease the first-stage cross-training level as there is more chance that cross-trained workers may not be needed when demand is observed. However, when $\tilde{\delta}^1_i\neq \tilde{\delta}^2_i$ with a positive probability, some capacity is lost by not training in the first-stage. When the opportunity cost and the demand variance is really high, the decision maker cannot effort to lose this capacity. Hence, it is better to take the risk of unnecessary cross-training rather than facing a high opportunity cost. Hence, we conclude that $c^1_i\geq \tilde{\delta}^2_ih_i$ is  sufficient for monotonicity but not necessary.
 
\begin{figure}
\begin{center}
\begin{tabular}{cc}
\subfigure[$h=5000$]{\label{Fg:TCvsDemvarA06_5000FSH}
\includegraphics[scale=0.38]{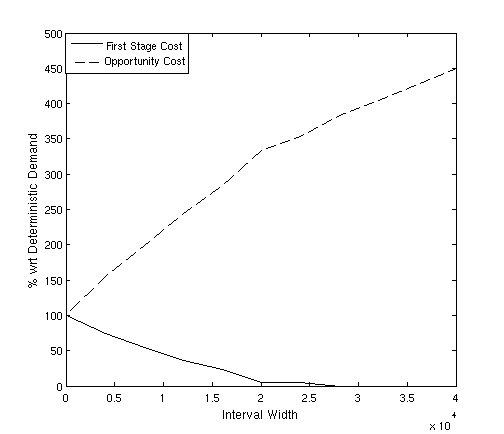}}&\subfigure[$h=6000$]{\label{Fg:TCvsDemvarA06_6000FSH}
\includegraphics[scale=0.38]{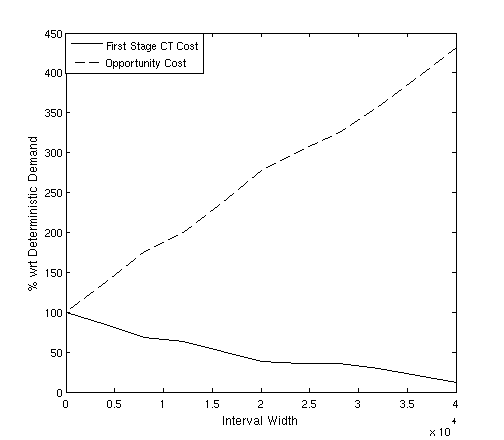}}\\
\subfigure[$h=7000$]{\includegraphics[scale=0.38]{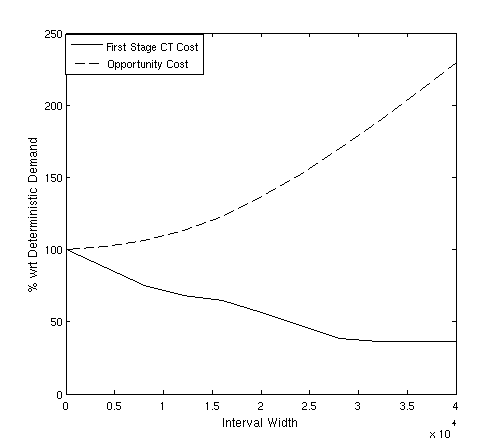}}&\subfigure[$h=8000$]{\label{Fg:TCvsDemvarA06_8000FSH}\includegraphics[scale=0.38]{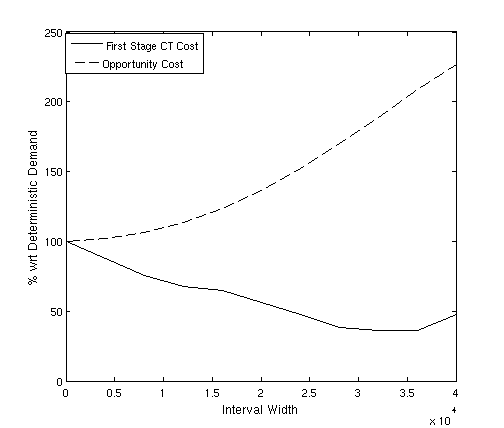}}
\end{tabular}
\end{center}
\vspace{-0.2in}
\caption{Individual costs with respect to the variability of demand for $\alpha$ when $\tilde{\delta}^2_{\alpha}=\tilde{\delta}^2_{\gamma}=0.6$}
\label{Fg:FSHCvsDemVar06}
\end{figure}
\begin{figure}
\begin{center}
\begin{tabular}{cc}
\subfigure[First and second stage cross-training costs]{\label{Fg:TCvsDemvarG09_5000FS}
\includegraphics[scale=0.38]{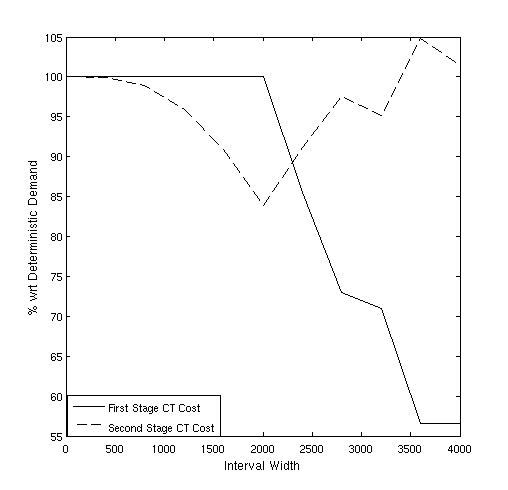}$\quad\quad$}&
\subfigure[Opportunity cost]{\label{Fg:TCvsDemvarG09_5000H}
\includegraphics[scale=0.38]{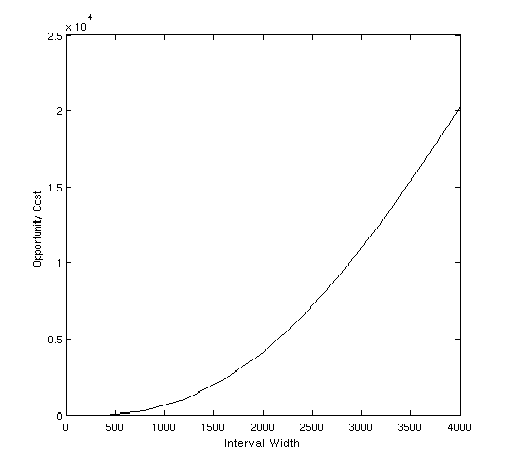}}
\end{tabular}
\end{center}
\vspace{-0.2in}
\caption{Individual costs with respect to the variability of demand for $\gamma$ when $\tilde{\delta}^2_{\alpha}=\tilde{\delta}^2_{\gamma}=0.9$}
\label{Fg:FSHCvsDemVar09}
\end{figure}
Figure~\ref{Fg:FSHCvsDemVar09} shows how the variance of demand for $\gamma$ affects the cross-training policies when the second  stage productivity is as high as the first stage productivity. The variability of $\gamma$ demand has an influence over cross-training policies only when variance is significant. Moderate variances do not cause a change in our first stage cross-training policies. This variability is mitigated by increasing the opportunity cost and the second stage cross-training decreases. When the variance is significant, the first stage cross-training cost starts decreasing and as a result second stage cross-training cost increases together with the opportunity cost. 


Another important factor determining our cross-training policies is the variability of productivity factors. We start by investigating the effect of the variability of the first stage productivity factor.  Similar to our experiments with demand variability, we assume uniform distribution for the first stage productivity factors, fix $\mathbb{E}[\tilde{\delta}^1_\alpha]=\mathbb{E}[\tilde{\delta}^1_\gamma]=0.75$, and vary the width of the support of these random variables. We assume a relatively high opportunity cost of $h_\alpha=h_\gamma=8000$. Figure~\ref{Fg:TCvsDeltaF075_8000FS} demonstrates how the individual costs behave when the second stage productivity factor is deterministically equal to the expected value of the first stage productivity factor. We observe that the first stage cross-training decreases monotonically and this decrease is mitigated by an increase in the second stage cross-training.  

\begin{figure}[h]
\begin{center}
\begin{tabular}{cc}
\subfigure[$\tilde{\delta}^2_\alpha=\tilde{\delta}^2_\gamma=0.75$ ]{\label{Fg:TCvsDeltaF075_8000FS}
\includegraphics[scale=0.38]{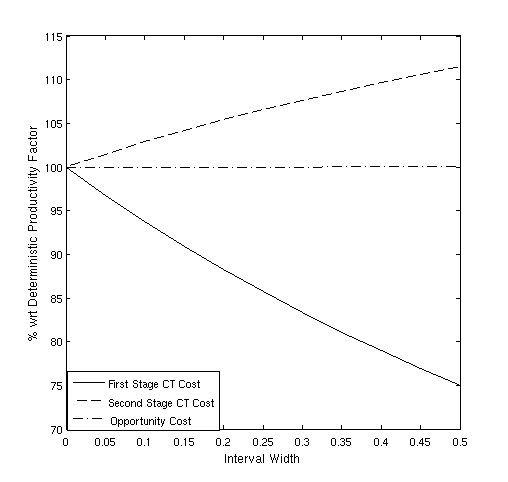}}&
\subfigure[$\tilde{\delta}^2_\alpha=\tilde{\delta}^2_\gamma=0.65$ ]{\label{Fg:TCvsDeltaF065_8000FS}
\includegraphics[scale=0.38]{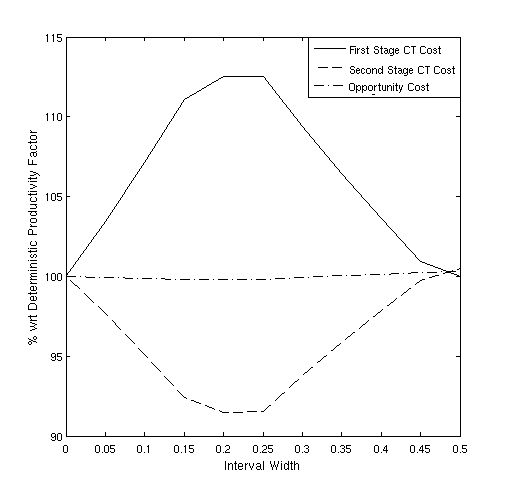}}
\end{tabular}
\end{center}
\vspace{-0.2in}
\caption{Costs with respect to the variability of the first stage productivity factor}
\label{Fg:FSHCvsDelta75}
\end{figure}

\begin{figure}[!h]
\begin{center}
\begin{tabular}{cc}
\subfigure[$h_\alpha=h_\gamma=6000$]{\label{Fg:TCvsDeltaS075_6000FS}
\includegraphics[scale=0.4]{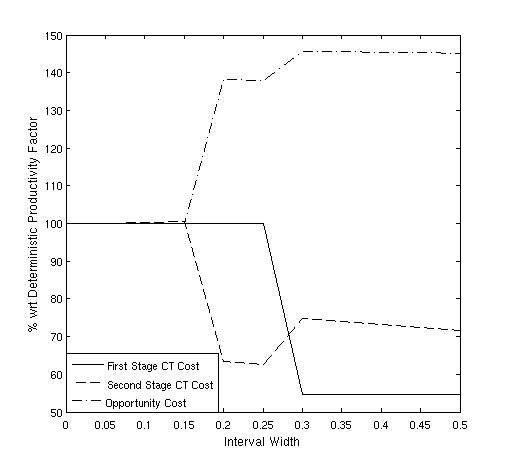}}&
\subfigure[$h_\alpha=h_\gamma=8000$]{\label{Fg:TCvsDeltaS075_8000FS}
\includegraphics[scale=0.4]{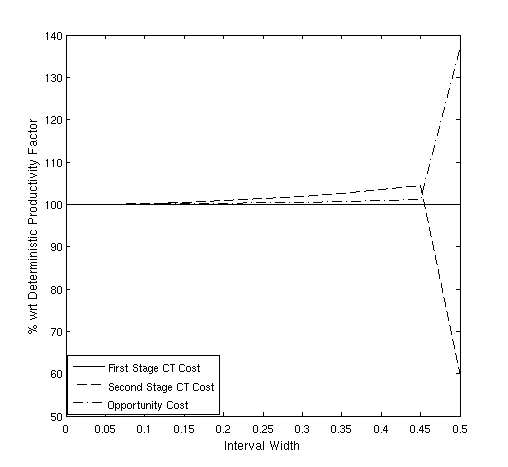}}
\end{tabular}
\end{center}
\vspace{-0.2in}
\caption{Costs with respect to the variability of the second stage productivity factor}
\label{Fg:FSHCvsDeltaS75}
\end{figure}

\begin{figure}[!h]
\begin{center}
\vspace{-0.3in}
\begin{tabular}{cc}
\subfigure[$h_\alpha=h_\gamma=6000$]{\label{Fg:TCvsDeltaS075_6000TC}
\includegraphics[scale=0.4]{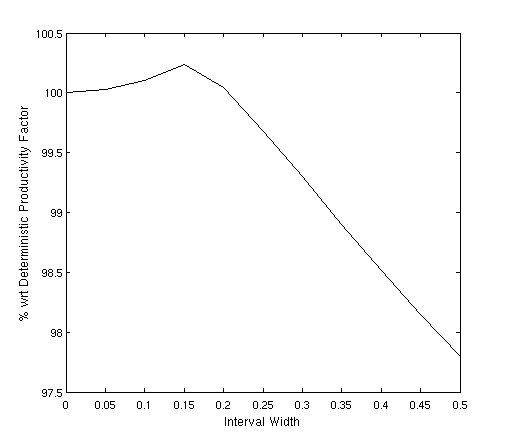}}&
\subfigure[$h_\alpha=h_\gamma=8000$]{\label{Fg:TCvsDeltaS075_8000TC}
\includegraphics[scale=0.39]{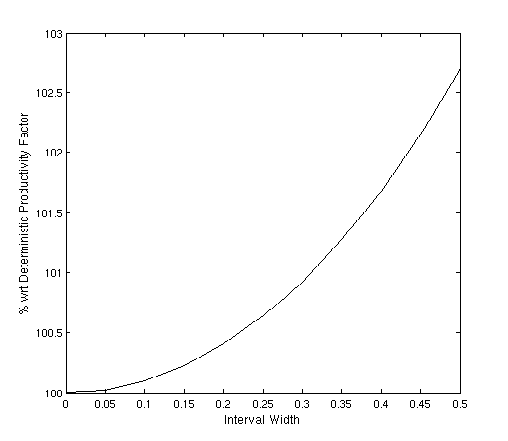}}
\end{tabular}
\end{center}
\vspace{-0.2in}
\caption{Total cost with respect to the variability of the second stage productivity factor}
\label{Fg:TCvsDeltaS75}
\end{figure}

However, when the second stage cross-training productivity factor is lower than the expected value of the first stage, the cross-training policies exhibit a different behavior. Figure~\ref{Fg:TCvsDeltaF065_8000FS} demonstrates our experiments when $\tilde{\delta}^2_\alpha=\tilde{\delta}^2_\gamma=0.65$ with probability one. We observe that the first stage cross-training increases when the variance is small and decreases for higher variances. The intuitive reasoning for this behavior is as follows: When the variance of the first-stage productivity factor is small, cross-training workers in the first stage increases the total available capacity for every realization of the productivity factor and this extra capacity can be used. However, when the variance is high, we can observe really low productivity factors which may make it more beneficial to cross-train in the second stage. When the productivity factors are observed really high, there may not be enough demand to justify such a high capacity. Hence, if the variance increases above a certain threshold, it is beneficial to decrease first-stage cross-training.  


Next, we investigate how the variability of the second stage productivity factor affects the cross-training policies for low ($h_\alpha=h_\gamma=6000$) and high ($h_\alpha=h_\gamma=8000$) opportunity costs. The first stage productivity factor is assumed to be deterministic and the expected value of the second stage is taken to be equal to the first stage productivity. Figure~\ref{Fg:TCvsDeltaS075_6000FS} indicates that the first stage cross-training is not affected for low variability when opportunity cost is low. However, when there is a significant probability that the second stage cross-training can be more effective than the first stage, the optimal first stage cross-training decreases. The second stage cross-training also tends to decrease until we hit this critical probability. When the opportunity cost is high, the first stage cross-training is not affected and the small variability yields an increase in the second stage cross-training as demonstrated in Figure~\ref{Fg:TCvsDeltaS075_8000FS}. However, incurring opportunity cost becomes preferable for higher variability.

Proposition~\ref{Pr:Convdelta} indicates that when the second stage productivity factor for task $i$ ($i=\alpha,\gamma$) lies in $[c^2_i/h_i,1]$ the total cost is a non-decreasing function of the variance of $\tilde{\delta}^2_i$. Figure~\ref{Fg:TCvsDeltaS075_8000TC} demonstrates this behavior when $h_\alpha=h_\gamma=8000$. However, when $h_\alpha=h_\gamma=6000$, there is a positive probability that the second stage productivity can lie outside this interval. Figure~\ref{Fg:TCvsDeltaS075_6000TC} indicates that the total cost can be decreasing when this is the case.

\vspace{-0.2in} 

\section{Concluding Remarks}
\vspace{-0.1in}
In this work, we have studied the effect of imperfect training schemes on the cross-training policies. We have considered a two-stage model, where the workers can be cross-trained offline in the first stage, before the demand is realized, and online in the second stage as the demand is revealed. The cross-trained workers are assumed to be less productive than the workers who are originally trained to do a specific task and the productivity of the cross-trained workers may depend on when they are cross-trained. Surprisingly, when the first stage and second stage training schemes are equally effective, then the cross-training decisions are independent of the hiring or opportunity costs. When the first and second stage training policies differ in their effectiveness, the structure of the problem changes significantly. First, the objective function is not convex anymore and hence the newsvendor-type equations we propose provide necessary conditions for optimality but they are not sufficient. 

We have also analyzed how the variability of demand and productivity factors affect our cross-training decisions. We have shown that under some mild conditions, we tend to cross-train less in the first stage as the demand or productivity factors become more variable. However, when these conditions do not hold, we show via counter-examples that we may wish to increase first stage cross-training as variability increases. Another interesting point is that all the counter-examples we found in this context assume that the training effectiveness is different for first and second stages. 

The insights provided in this paper can be used to devise effective solution methods to address the case when there are more than three tasks. We have also developed a two-stage stochastic integer program to aid decision makers to design cross-training policies in the presence of multiple tasks. We do not present this model in this paper to keep the focus on managerial insights. The integer programming model is available from the authors upon request. 
\vspace{-0.1in}


%
%
%


\bibliographystyle{ormsv080}
\bibliography{Bibliography}

\begin{thebibliography}{25}
\expandafter\ifx\csname natexlab\endcsname\relax\def\natexlab#1{#1}\fi
\expandafter\ifx\csname url\endcsname\relax
  \def\url#1{{\tt #1}}\fi
\expandafter\ifx\csname urlprefix\endcsname\relax\def\urlprefix{URL }\fi
\expandafter\ifx\csname urlstyle\endcsname\relax
  \expandafter\ifx\csname doi\endcsname\relax
  \def\doi#1{doi:\discretionary{}{}{}#1}\fi \else
  \expandafter\ifx\csname doi\endcsname\relax
  \def\doi{doi:\discretionary{}{}{}\begingroup \urlstyle{rm}\Url}\fi \fi

\bibitem[{Bassamboo et~al.(2009)Bassamboo, Randhawa, and {Van
  Mieghem}}]{basranvan09a}
Bassamboo, A., R.S. Randhawa, J.A. {Van Mieghem}. 2009.
\newblock A little flexibility is all you need: On the value of flexible
  resources in queueing systems Working paper.

\bibitem[{Bassamboo et~al.(2010)Bassamboo, Randhawa, and {Van
  Mieghem}}]{basranvan09b}
Bassamboo, A., R.S. Randhawa, J.A. {Van Mieghem}. 2010.
\newblock Optimal flexibility configurations in newsvendor networks: going
  beyond chaining and pairing.
\newblock {\it Management Science\/} {\bf 56} 1285--1303.

\bibitem[{Brusco and Johns(1998)}]{brujoh98}
Brusco, M.J., T.R. Johns. 1998.
\newblock Staffing multiskilled workforce with varying levels of productivity:
  An analysis of cross-training policies.
\newblock {\it Decision Sciences\/} {\bf 29}(2) 499--515.

\bibitem[{Campbell(1999)}]{campbell99}
Campbell, G.M. 1999.
\newblock Cross-utilization of workers whose capabilities differ.
\newblock {\it Management Science\/} {\bf 45}(5) 722--732.

\bibitem[{Campbell(2011)}]{Campbell11}
Campbell, G.M. 2011.
\newblock A two-stage stochastic program for scheduling and allocating
  cross-trained workers.
\newblock {\it Journal of the Operational Research Society\/} {\bf 62}
  1038--1047.

\bibitem[{Chakravarthy and Agnihothri(2005)}]{chaagni}
Chakravarthy, S.R., S.R. Agnihothri. 2005.
\newblock Optimal workforce mix in service systems with two types of customers.
\newblock {\it Production \& Operations Management\/} {\bf 14}(2) 218--231.

\bibitem[{Chou et~al.(2010)Chou, Chua, and Teo}]{cct10}
Chou, M.C., G.A. Chua, C-P. Teo. 2010.
\newblock On range and response: Dimensions of process flexibility.
\newblock {\it European Journal of Operational Research\/} {\bf 207} 711--724.

\bibitem[{Davis et~al.(2009)Davis, Kher, and Wagner}]{DavisKher09}
Davis, D.D., H.V. Kher, B.J. Wagner. 2009.
\newblock Influence of workload imbalances on the need for worker flexibility.
\newblock {\it Computers and Industrial Engineering\/} {\bf 57} 319--329.

\bibitem[{Gel et~al.(2007)Gel, Hopp, and {Van Oyen}}]{gelhopvan07}
Gel, E.S., W.J. Hopp, M.P. {Van Oyen}. 2007.
\newblock Hierarchical cross-training in work-in-process-constrained systems.
\newblock {\it IIE Transactions\/} {\bf 39} 125--143.

\bibitem[{Gnanlet and Wendell(2009)}]{Gnanlet09}
Gnanlet, A., G.G. Wendell. 2009.
\newblock Sequential and simultaneous decision making for optimizing health
  care resource flexibilities.
\newblock {\it Decision Sciences\/} {\bf 40} 295--326.

\bibitem[{Hopp et~al.(2004)Hopp, Tekin, and {Van Oyen}}]{hoptekvan04}
Hopp, W.J., E.~Tekin, M.P. {Van Oyen}. 2004.
\newblock Benefits of skill chaining in serial production lines with
  cross-trained workers.
\newblock {\it Management Science\/} {\bf 50}(1) 83--98.

\bibitem[{Hopp and {Van Oyen}(2004)}]{hoppvanoyen04}
Hopp, W.J., M.P. {Van Oyen}. 2004.
\newblock Agile workforce evaluation: a framework for cross-training and
  coordination.
\newblock {\it IIE Transactions\/} {\bf 36} 919--940.

\bibitem[{Iravani et~al.(2007)Iravani, Kolfal, and {Van Oyen}}]{irakolvan07}
Iravani, S.M.R., B.~Kolfal, M.P. {Van Oyen}. 2007.
\newblock Call-center labor cross-training: it'\unskip s a small world after
  all.
\newblock {\it Management Science\/} {\bf 53}(7) 1102--1112.

\bibitem[{Jordan and Graves(1995)}]{jorgra95}
Jordan, W.C., S.C. Graves. 1995.
\newblock Principles on the benefits of manufacturing process flexibility.
\newblock {\it Management Science\/} {\bf 41}(4) 577--594.

\bibitem[{Jordan et~al.(2004)Jordan, Inman, and Blumenfeld}]{jorinblu04}
Jordan, W.C., R.R. Inman, D.E. Blumenfeld. 2004.
\newblock Chained cross-training of workers for robust performance.
\newblock {\it IIE Transactions\/} {\bf 36} 953--967.

\bibitem[{Lyons(1992)}]{lyons92}
Lyons, R.F. 1992.
\newblock Cross-training: A richer staff for leaner budgets.
\newblock {\it Nursing Management\/} {\bf 23}(1) 43--44.

\bibitem[{McCune(1994)}]{mccune94}
McCune, J.C. 1994.
\newblock On the train gang.
\newblock {\it Management Review\/} {\bf 83}(10) 57--60.

\bibitem[{Netessine et~al.(2002)Netessine, Dobson, and Shumsky}]{netdobshu02}
Netessine, S., G.~Dobson, R.A. Shumsky. 2002.
\newblock Flexibly service capacity: Optimal investment and the impact of
  demand correlation.
\newblock {\it Operations Research\/} {\bf 50}(2) 375--388.

\bibitem[{Pinker and Shumsky(2000)}]{pinshu00}
Pinker, E.J., R.A. Shumsky. 2000.
\newblock The efficiency-quality trade-off of cross-trained workers.
\newblock {\it Manufacturing \& Service Operations Management\/} {\bf 2}(1)
  32--48.

\bibitem[{Say{\i}n and Karabat{\i}(2007)}]{saykar07}
Say{\i}n, S., S.~Karabat{\i}. 2007.
\newblock Assigning cross-trained workers to departments: A two-stage
  optimization model to maximize utility and skill improvement.
\newblock {\it European Journal of Operational Research\/} {\bf 176}
  1643--1658.

\bibitem[{Tanrisever et~al.(2012)Tanrisever, Morrice, and
  Morton}]{Tanrisever2012334}
Tanrisever, F., D.~Morrice, D.~Morton. 2012.
\newblock Managing capacity flexibility in make-to-order production
  environments.
\newblock {\it European Journal of Operational Research\/} {\bf 216}(2) 334 --
  345.

\bibitem[{Vairaktarakis and Winch(1999)}]{vaiwin99}
Vairaktarakis, G., J.K. Winch. 1999.
\newblock Worker cross-training in paced assembly lines.
\newblock {\it Manufacturing \& Service Operations Management\/} {\bf 1}(2)
  112--131.

\bibitem[{{Van Mieghem}(1998)}]{van98}
{Van Mieghem}, J.A. 1998.
\newblock Investment strategies for flexible resources.
\newblock {\it Management Science\/} {\bf 44}(8) 1071--1078.

\bibitem[{{Van Mieghem} and Rudi(2002)}]{vanrud02}
{Van Mieghem}, J.A., N.~Rudi. 2002.
\newblock Newsvendor networks: dynamic inventory management and capacity
  investments with discretionary pooling.
\newblock {\it Manufacturing \& Service Operations Management\/} {\bf 4}(4)
  313--335.

\bibitem[{Walsh et~al.(2000)Walsh, Carlyle, Fowler, and Wee}]{wcfw00}
Walsh, M., W.M. Carlyle, J.W. Fowler, C.~Wee. 2000.
\newblock Modeling operator cross-training in wafer fabrication given uncertain
  demand: A stochastic programming approach.
\newblock J.~K. Cochran, J.~W. Fowler, S.~Brown, eds., {\it Proc. of the Int.
  Conf. on Modeling and Analysis of Semiconductor Manufacturing\/}. 226--232.

\end{thebibliography}

\end{document}